\numberwithin{equation}{section}
\numberwithin{figure}{section}
\theoremstyle{plain}
\newtheorem{thm}{\protect\theoremname}
\theoremstyle{remark}
\newtheorem{rem}[thm]{\protect\remarkname}
\theoremstyle{definition}
\newtheorem{defn}[thm]{\protect\definitionname}
\theoremstyle{plain}
\newtheorem{lem}[thm]{\protect\lemmaname}
\theoremstyle{remark}
\newtheorem*{rem*}{\protect\remarkname}
\theoremstyle{remark}
\newtheorem{claim}[thm]{\protect\claimname}
\theoremstyle{remark}
\newtheorem*{acknowledgement*}{\protect\acknowledgementname}
\providecommand{\acknowledgementname}{Acknowledgement}
\providecommand{\claimname}{Claim}
\providecommand{\definitionname}{Definition}
\providecommand{\lemmaname}{Lemma}
\providecommand{\remarkname}{Remark}
\providecommand{\theoremname}{Theorem}
\begin{document}

\title{Interior curvature estimates for hypersurfaces of prescribing scalar
curvature in dimension three}
\author{GUOHUAN QIU }
\begin{abstract}
We prove a priori interior curvature estimates for hypersurfaces of
prescribing scalar curvature equations in $\mathbb{R}^{3}$. The method
is motivated by the integral method of Warren and Yuan in \cite{warren2009hessian}.
The new observation here is that the ``Lagrangian'' submanifold
constructed similarly as Harvey and Lawson in \cite{harvey1982calibrated}
has bounded mean curvature if the graph function of a hypersurface
satisfies the scalar curvature equation.
\end{abstract}

\subjclass[2000]{35J60,35B45.}
\thanks{This work is supported by Direct Grant, CUHK}
\address{Department of Mathematics, The Chinese University of Hong Kong, Shatin,
N.T., Hong Kong, China. }
\email{ghqiu@math.cuhk.edu.hk}
\maketitle

\section{Introduction}

In this paper, we are studying the regularity thoery of a hypersurface
$M^{n}\subseteq\mathbb{R}^{n+1}$ with positive scalar curvature $R_{g}>0$.
In hypersurface geometry, the Gauss equation tells us
\[
R_{g}=\sigma_{2}(\kappa):=\sum_{1\leq i_{1}<i_{2}\leq n}\lambda_{i_{1}}\lambda_{i_{2}}
\]
 where $\kappa(x)=(\lambda_{1}(x),\cdots,\lambda_{n}(x))$ are the
principal curvatures of the hypersurface.

Suppose $M^{n}$ is a $C^{1}$ graph $X=(x,u(x))$ over $x\in B_{r}\subseteq\mathbb{R}^{n}$.
In this setting, the scalar curvature equation which we study in this
paper is
\begin{equation}
\sigma_{2}(\kappa(x))=f(X(x),\nu(x))>0,\label{eq:scalar}
\end{equation}

where $\nu$ is the normal of the given hypersurface as a graph over
a ball $B_{r}\subset\mathbb{R}^{n}$. This is the second order elliptic
PDE depends on the graph function $u$. If $n=2$, it is Monge-Ampere
equation
\begin{equation}
\det(u_{ij})=f(x,u,\nabla u).\label{eq:MA}
\end{equation}

Our study of the scalar curvature equation is motived by isometric
embedding problems. A famous isometric embedding problem is the Weyl
problem. It is the problem of realizing, in three-dimensional Euclidean
space, a regular metric of positive curvature given on a sphere. This
is the Weyl problem which was finally solved by Nirenberg \cite{nirenberg1953weyl}
and Pogorelov \cite{pogorelov1973extrinsic} independently. They solved
the problem of Weyl by a continuity method where obtaining $C^{2}$
estimate to the scalar curvature equation is important to the method.

Also motived by the Weyl problem, E. Heinz \cite{heinz1959elliptic}
first derived a purely interior estimate for the equation (\ref{eq:MA})
in dimension two. If $u$ satisfies the equation (\ref{eq:MA}) in
$B_{r}\subseteq\mathbb{R}^{2}$ with positive $f$, then
\begin{equation}
\sup_{B_{\text{\ensuremath{\frac{r}{2}}}}}\lvert D^{2}u\rvert\leq C(\lvert u\vert_{C^{1}(B_{r})},\lvert f\vert_{C^{2}(B_{r})},\inf_{B_{r}}f).\label{eq:interior}
\end{equation}
And this type of estimate turns out to be very useful when one study
the isometric embedding problem for surfaces with boundary or for
non-compact surfaces. But Heinz's interior $C^{2}$ estimate is false
for the convex solutions to the equation $\det D^{2}u=1$ $in$ $B_{r}\subseteq\mathbb{R}^{n}$
when $n\geq3$ by Pogorelov \cite{pogorelov1978multidimensional}
.

The second motivation is from the studying of fully nonlinear partial
differential equation theory itself. Caffarelli-Nirenberg-Spruck started
to study $\sigma_{k}-$Hessian operators and established existence
of Dirichlet problem for $\sigma_{k}$ equations in their seminal
work \cite{caffarelli1985dirichlet}. Here the $\sigma_{k}-$Hessian
operators are the $k-$th elementary symmetric function for $1\le k\leq n$.
The key to the existence of Dirichlet problem is by establishing $C^{2}$
estimates up to the boundary.
\[
\sup_{\bar{\Omega}}\lvert D^{2}u\rvert\leq C(\lvert u\rvert_{C^{1}(\bar{\Omega})},f,\varphi,\partial\Omega).
\]
Although there are $C^{2}$ estimates to $\sigma_{k}-$Hessian equations
for boundary value problems, there are, in general, no interior $C^{2}$
estimates to $\sigma_{k}-$Hessian equations. Because the Pogorelov's
counter-examples were extended in \cite{urbas1990existence} to $k\geq3$.
The best we can expect is the Pogorelov type interior $C^{2}$ estimates
with homogeneous boundary data which were derived by Pogorelov \cite{pogorelov1978multidimensional}
for $k=n$ and by Chou-Wang for $k<n$ \cite{chou2001variational}
. So people in this field want to know whether the interior $C^{2}$
estimate for $\sigma_{2}$ equations holds or not for $n\geq3$. In
fact, the interior regularity for solutions of the following $\sigma_{2}$-Hessian
equation and prescribing scalar curvature equation is a longstanding
problem,
\begin{equation}
\sigma_{2}(\nabla^{2}u)=f(x,u,\nabla u)>0,\begin{array}{ccc}
 & x\in B_{r}\subset\mathbb{R}^{n}.\end{array}\label{eq:sigma2}
\end{equation}
and
\[
\sigma_{2}(\kappa(x))=f(X(x),\nu(x))>0.
\]

A major breakthrough was made by Warren-Yuan \cite{warren2009hessian}.
They obtained $C^{2}$ interior estimate for the equation
\begin{equation}
\sigma_{2}(\nabla^{2}u)=1.\begin{array}{ccc}
 & x\in B_{1}\subset\mathbb{R}^{3}\end{array}\label{eq:specialLag}
\end{equation}
Recently in \cite{MSY}, McGonagle-Song-Yuan proved interior $C^{2}$
estimate for convex solutions of the equation $\sigma_{2}(\nabla^{2}u)=1$
in any dimensions. Using a different argument, the interior $C^{2}$
estimates for solutions of more general equations (\ref{eq:sigma2})
and (\ref{eq:scalar}) in any dimensions with certain convexity constraints
were also obtained by Guan-Qiu in \cite{guan2017interior}. Moreover,
we proved interior curvature estimates for isometrically immersed
hypersurfaces in $\mathbb{R}^{n+1}$ with positive scalar curvature
in that paper \cite{guan2017interior}.

In this paper, we completely solve this problem for scalar equations
in dimension three.
\begin{thm}
\label{thm:scalar} Suppose $M$ is a smooth graph over $B_{10}\subset\mathbb{R}^{3}$
with positive scalar curvature and it is a solution of equation (\ref{eq:scalar}).
Then we have
\begin{equation}
\sup_{x\in B_{\frac{1}{2}}}|\kappa(x)|\leq C,
\end{equation}
where $C$ depends only on $||M||_{C^{1}(B_{10})}$ $\|f\|_{C^{2}(B_{10}\times\mathbb{S}^{2})}$,
and $\|\frac{1}{f}\|_{L^{\infty}(B_{10}\times\mathbb{S}^{2})}$ .
\end{thm}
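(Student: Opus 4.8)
\bigskip
\noindent\emph{Proof strategy.} The plan is to run the integral method of Warren--Yuan \cite{warren2009hessian} in the geometric setting of prescribed scalar curvature. Write $M=\{(x,u(x)):x\in B_{10}\}\subset\mathbb{R}^{4}$, let $\kappa=(\kappa_{1},\kappa_{2},\kappa_{3})$ be its principal curvatures, put $\theta_{i}=\arctan\kappa_{i}$, and let $\nu$ be the Gauss map. Following Harvey--Lawson, I would attach to $M$ an auxiliary $3$-dimensional submanifold $L\subset\mathbb{R}^{N}$ built from the position $X$ and the normal $\nu$ (the ``Lagrangian'' of the abstract), chosen so that in a principal frame its induced metric is $g_{L}=\operatorname{diag}(1+\kappa_{i}^{2})$. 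Then the volume element of $L$ over $B_{10}$ is comparable to $V\,dx$ with $V=\prod_{i}\sqrt{1+\kappa_{i}^{2}}$, and a pointwise bound on $V$ is equivalent to a pointwise bound on $|\kappa|$ once one uses $\sigma_{2}(\kappa)=f\ge\inf f>0$.

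The first and decisive step is the \emph{bounded mean curvature} observation. From the elementary identity
\[
\cos(\theta_{1}+\theta_{2}+\theta_{3})=\Big(\prod_{i}\cos\theta_{i}\Big)\big(1-\sigma_{2}(\kappa)\big),
\]
equation \eqref{eq:scalar} reads $\cos\Theta=(1-f)/V$ for the Lagrangian angle $\Theta=\theta_{1}+\theta_{2}+\theta_{3}$; this is precisely where $n=3$ is used, since in higher dimensions the right side would also involve $\sigma_{4}(\kappa)$ and beyond. I would then compute $\vec{H}_{L}$ directly: although the ``$J\nabla_{g_{L}}\Theta$'' part of $\vec{H}_{L}$ and the part produced by the rotation of $T_{p}M$ each seemingly contain the uncontrolled derivatives $\nabla\kappa$, these cancel, and the surviving terms are of size $\kappa_{i}$ in the $i$-th direction, hence bounded in the degenerate metric $g_{L}$; the upshot is
\[
|\vec{H}_{L}|\le C\big(\|M\|_{C^{1}(B_{10})},\ \|f\|_{C^{1}},\ \|1/f\|_{L^{\infty}}\big),
\]
with no norm of $D^{2}u$ entering. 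In particular $L$ obeys a Michael--Simon Sobolev inequality with constant controlled by the same quantities.

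Next I would take the test function $b=\log V=\tfrac12\log\det g_{L}$ on $(L,g_{L})$ and derive a Jacobi-type inequality
\[
\Delta_{g_{L}}b\ \ge\ c\,|\nabla_{g_{L}}b|_{g_{L}}^{2}-C,
\]
where $c>0$ comes from differentiating the concave function $\sigma_{2}^{1/2}$ twice in the largest-curvature direction and $C$ absorbs $\vec{H}_{L}$ and the derivatives of $f$; an easy preliminary case distinction handles the regime in which $\kappa_{\max}$ is already bounded. Consequently, for small $\beta>0$, $V^{\beta}$ is a subsolution on $(L,g_{L})$ up to a controlled zeroth-order term, and a Moser iteration built on the Michael--Simon inequality (in the spirit of Hoffman--Spruck) gives a mean value inequality bounding $\sup_{B_{1/2}\cap L}V$ by $C\,\Big(\tfrac{1}{\operatorname{Vol}(B_{5}\cap L)}\int_{B_{5}\cap L}V^{\beta}\,d\mu_{L}+1\Big)^{1/\beta}$.

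It then remains to bound the weighted integral a priori. The $C^{1}$ bound on $M$ confines $L$ to a fixed bounded region of $\mathbb{R}^{N}$, and together with the bound on $\vec{H}_{L}$ this controls $\operatorname{Vol}(L\cap B_{R})$ via the monotonicity formula; the extra weight $V^{\beta}$ is then removed by an integration by parts exploiting the (almost) calibrated structure of $L$ — here $\Theta$ is close to $\tfrac{\pi}{2}$ where the curvature is large — as in \cite{warren2009hessian}. Combining the mean value inequality with this bound yields $\sup_{B_{1/2}}V\le C$, hence $\sup_{B_{1/2}}|\kappa|\le C$. The two steps I expect to be genuinely hard are the mean curvature identity — the cancellation of the $\nabla\kappa$ terms, which is what makes the estimate geometric rather than purely PDE-theoretic and keeps the dependence on $M$ at the $C^{1}$ level — and the a priori control of the weighted volume integral.
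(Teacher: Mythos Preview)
Your overall architecture matches the paper's: build the auxiliary three-manifold $L=(X,\nu)\subset\mathbb{R}^{4}\times\mathbb{R}^{4}$, show it has bounded mean curvature, deduce a Michael--Simon mean value inequality, and finish by bounding an integral. But two of your key mechanisms do not work as stated.

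First, the Lagrangian-angle route to $|\vec{H}_{L}|\le C$. The immersion $(X,\nu)$ is three-dimensional in $\mathbb{R}^{8}$; it is not Lagrangian, there is no ambient complex structure $J$ to invoke, and the identity $\vec{H}=J\nabla\Theta$ has no meaning here. The algebraic fact $\cos\Theta=(1-f)/V$ is correct but idle. The paper simply computes $G^{ij}X^{\Sigma}_{ij}$ from the Gauss and Weingarten relations; the third-order terms collapse because $\sigma_{2}^{ij}h_{ijk}=f_{k}$ (differentiate the equation), and the remaining pieces are $-2\sigma_{2}\nu$ and $-(\sigma_{1}\sigma_{2}-3\sigma_{3})\nu$ divided by $\sigma_{1}f-\sigma_{3}$, hence bounded. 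For general $f$ the paper instead puts the conformal weight $f$ on the first $\mathbb{R}^{4}$ factor, so that the induced volume form is exactly $(\sigma_{1}f-\sigma_{3})\,dM$, and runs a Michael--Simon monotonicity argument directly in that weighted metric.

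Second, and more seriously, your final step ``remove the weight $V^{\beta}$ by integration by parts exploiting the almost calibrated structure'' has no analogue here, because there is no calibration. This is where your plan has a genuine gap. The paper replaces this with a purely PDE device: the Newton tensors $[T_{k}]^{j}_{i}$ are divergence-free, $\partial_{j}[T_{k}]^{j}_{i}=0$, so the troublesome piece $\int b\,(-\sigma_{3})\,dx$ can be integrated by parts against $D_{j}(u_{i}/W)$ and reduced, via the recursion $[T_{2}]^{j}_{i}=\sigma_{2}\delta^{j}_{i}-[T_{1}]^{k}_{i}h^{j}_{k}$, to quantities controlled by $\int\sigma_{2}^{ij}b_{i}b_{j}\phi^{2}\,dx$ and $\int\sigma_{1}\,dx$. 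The former is bounded by testing the Jacobi inequality itself against $\phi^{2}$; the latter is the elementary Lemma~\ref{lem:areaEstimate}. None of this uses any special-Lagrangian geometry.

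Two smaller differences. The paper's test function is $b=\log\sigma_{1}$, not $\log V$; the Jacobi inequality $\sigma_{2}^{ij}b_{ij}\ge c\,\sigma_{2}^{ij}b_{i}b_{j}-C(\sigma_{1}f-\sigma_{3})+\ldots$ is proved for this choice by an explicit quadratic-form analysis in the appendix (Claims~\ref{claim1} and~\ref{claim2}), and it is not clear your $\log V$ satisfies the same inequality without redoing that computation. And the paper does \emph{not} Moser-iterate: the monotonicity argument yields directly $b(y_{0})\le C\int_{B_{1}(y_{0})} b\,(\sigma_{1}f-\sigma_{3})\,dx$, so only a linear-in-$b$ integral needs to be bounded. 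This is both simpler and what makes the Newton-tensor integration by parts close.
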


Analogously we have
\begin{thm}
\label{sigma2Equation} Let $u$ be a solution to (\ref{eq:sigma2})
on $B_{10}\subset\mathbb{R}^{3}$ . Then we have
\begin{equation}
\sup_{B_{\frac{1}{2}}}|D^{2}u|\leq C,
\end{equation}
where $C$ depends only on $\|f\|_{C^{2}(B_{10}\times\mathbb{R}\times\mathbb{R}^{3})}$,
$\|\frac{1}{f}\|_{L^{\infty}(B_{10}\times\mathbb{R}\times\mathbb{R}^{3})}$
and $||u||_{C^{1}(B_{10})}$.
\end{thm}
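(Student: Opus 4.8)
The plan is to run the integral method used for Theorem \ref{thm:scalar} in the flat model; the $\sigma_{2}$-Hessian case is a simpler parallel of the hypersurface case, so I will only describe the structure. Fix a solution $u$ of (\ref{eq:sigma2}) on $B_{10}$ with $\|u\|_{C^{1}(B_{10})}\le K$. Since a $C^{1}$ bound is built in, bounding $|D^{2}u|$ on $B_{1/2}$ amounts to bounding from above the quantity
\[
b:=\tfrac12\log\det\!\big(I+(D^{2}u)^{2}\big)=\tfrac12\sum_{i=1}^{3}\log\!\big(1+\lambda_{i}^{2}\big),
\]
where $\lambda_{1},\lambda_{2},\lambda_{3}$ are the eigenvalues of $D^{2}u$; a bound $b\le C$ gives $|\lambda_{i}|\le e^{C}$. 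I will use that $\sigma_{2}$-admissibility ($D^{2}u\in\Gamma_{2}$) forces, in dimension three, $\Delta u\ge 0$ and $|D^{2}u|\le C\,\Delta u$, hence $b\le C(1+\Delta u)$; together with $\Delta u=\mathrm{div}(\nabla u)$ and the divergence identity $2\sigma_{2}(D^{2}u)=\partial_{i}\!\big((\Delta u)\,u_{i}-u_{ij}u_{j}\big)$, testing against cutoff functions yields the $L^{1}$-type bounds on $b$ and on the volume of the graph of $\nabla u$ over $B_{1}$, in terms of $K$, $\|f\|_{C^{0}}$ and $\|1/f\|_{L^{\infty}}$, that the iteration will consume. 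Because $|D^{2}u|$ is exactly the quantity being estimated, the estimate must be self-improving, which is where geometry enters.

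The geometric input is the ``Lagrangian'' submanifold built as in Harvey--Lawson \cite{harvey1982calibrated} and Warren--Yuan \cite{warren2009hessian}: let
\[
\Sigma:=\{(x,\nabla u(x)):x\in B_{10}\}\subset\mathbb{R}^{3}\times\mathbb{R}^{3},
\]
a Lagrangian graph with induced metric $g=I+(D^{2}u)^{2}$ and volume form $e^{b}\,dx$ (possibly after a unitary rotation of $\mathbb{C}^{3}$ adapted to the equation, as in the case $f\equiv1$). The identity $\prod_{j}(1+\mathrm i\lambda_{j})=(1-\sigma_{2}(D^{2}u))+\mathrm i(\sigma_{1}-\sigma_{3})$ shows that when $f\equiv1$ the Lagrangian phase is constant and $\Sigma$ is minimal; the new observation, as stated in the abstract, is that for general $f$ the submanifold $\Sigma$ has \emph{bounded mean curvature},
\[
\sup_{\Sigma}|\vec H_{\Sigma}|\le C\big(\|f\|_{C^{2}(B_{10}\times\mathbb{R}\times\mathbb{R}^{3})},\ \|1/f\|_{L^{\infty}},\ K\big).
\]
The mechanism: $\vec H_{\Sigma}$ is controlled by the gradient along $\Sigma$ of the Lagrangian phase, which after substituting $\sigma_{2}(D^{2}u)=f$ is expressed through $df$; and $f$, restricted to $\Sigma$, is the restriction of a fixed function of $(x,u,\nabla u)$ whose $C^{1}$-norm is controlled once $\|u\|_{C^{1}}$ is, so no uncontrolled derivative of $D^{2}u$ survives — the superficially third-order contributions must cancel. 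Establishing this cancellation, with the stated dependence on $\|f\|_{C^{2}}$ and $\|1/f\|_{L^{\infty}}$, is the step I expect to be the main obstacle.

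Granted the mean-curvature bound, the estimate closes as in \cite{warren2009hessian}. Bounded mean curvature gives $|\Delta_{\Sigma}x_{i}|,\,|\Delta_{\Sigma}u_{i}|\le C$ for the coordinate functions on $\Sigma$, and a computation with $g$ and the equation (using $n=3$ exactly where Warren--Yuan do) then produces the differential inequality $\Delta_{\Sigma}b\ge-C$, so $b$ is almost subharmonic on $\Sigma$. Bounded mean curvature also makes the Michael--Simon Sobolev inequality hold on $\Sigma$ with a controlled constant, so a Moser iteration (local maximum principle for subsolutions on $\Sigma$) yields
\[
\sup_{\Sigma\cap(B_{1/2}\times\mathbb{R}^{3})}b\ \le\ C\Big(\int_{\Sigma\cap(B_{1}\times\mathbb{R}^{3})}b+1\Big)\ \le\ C,
\]
the last inequality by the bounds from the first paragraph. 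Hence $b\le C$ on $B_{1/2}$, so $\sup_{B_{1/2}}|D^{2}u|\le C$ with $C$ depending only on $\|f\|_{C^{2}(B_{10}\times\mathbb{R}\times\mathbb{R}^{3})}$, $\|1/f\|_{L^{\infty}}$ and $\|u\|_{C^{1}(B_{10})}$, which is Theorem \ref{sigma2Equation}. The proof of Theorem \ref{thm:scalar} is the same argument with $D^{2}u$ replaced by the second fundamental form and the flat metric by the induced metric of the hypersurface, which is why the two statements run in parallel.
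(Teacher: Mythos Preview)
Your outline tracks the paper's strategy, but there is a real gap in how the estimate closes. After asserting $\Delta_\Sigma b\ge -C$ you run Moser on $\Sigma$ to get $\sup b\le C(\int_\Sigma b+1)$; but that integral is with respect to the induced volume $dV_\Sigma=V\,dx$ (indeed $V=e^{b}$ in your normalization), so what has to be bounded is $\int_{B_1} b\,V\,dx$, not the two separate quantities $\int_{B_1} b\,dx$ and $\int_{B_1} V\,dx$ produced in your first paragraph. Those two $L^{1}$ bounds do not control the product integral, so the sentence ``the last inequality by the bounds from the first paragraph'' is where the argument breaks.

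This is exactly the place where the paper does more work, and where your differential inequality is too weak. The paper proves (Lemma~\ref{lem3}), for $b=\log\sigma_{1}$ rather than your $b=\tfrac12\log\det(I+(D^{2}u)^{2})$, the stronger inequality with a gradient-squared gain,
\[
\sigma_{2}^{ij}b_{ij}\ \ge\ c_{0}\,\sigma_{2}^{ij}b_{i}b_{j}\;-\;C(f\sigma_{1}-\sigma_{3})\;+\;g^{ij}b_{i}\,d_{\nu}f(e_{j}),
\]
and it is precisely this $c_{0}\,\sigma_{2}^{ij}b_{i}b_{j}$ term that, after testing with $\phi^{2}$ and integrating by parts, yields $\int\phi^{2}\sigma_{2}^{ij}b_{i}b_{j}\,dx\le C$ and hence $\int|Db|\,dx\le C$; see the chain (\ref{eq:intlog}) and (\ref{eq:WY2})--(\ref{eq:WY4}). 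Only then can one bound $\int b\,f\sigma_{1}\,dx$ and, via the divergence-free Newton tensors $[T_{k}]_{i}^{j}$, the companion piece $\int(-b\sigma_{3})\,dx$, assembling the weighted integral $\int b(f\sigma_{1}-\sigma_{3})\,dx$ that the mean-value step actually produces. A bare almost-subharmonicity $\Delta_\Sigma b\ge -C$ gives no handle on $\int|Db|$ and the iteration does not close. A smaller point: for general $f$ the ``Lagrangian'' graph is taken in $(\mathbb{R}^{n}\times\mathbb{R}^{n},\,f\sum dx_{i}^{2}+\sum dy_{i}^{2})$, not in the flat metric; your ``unitary rotation'' aside gestures at this, but it is the conformal weight by $f$ that makes the inverse induced metric equal to $\sigma_{2}^{ij}/(f\sigma_{1}-\sigma_{3})$ and the mean curvature bounded.
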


\begin{rem}
The proof of Theorem \ref{sigma2Equation} is similar to Theorem \ref{thm:scalar}.
So we will omit its proof which can be found in a recent paper \cite{qiu2017interior}
by the author. The method given here is easier than to the previous
one, so we will not submit the previous paper \cite{qiu2017interior}.
\end{rem}

In order to introduce our idea, let us briefly review the ideas for
attacking this problem so far. In two dimensional case, Heinz used
Uniformization theorem to transform this interior estimate for Monge-Ampere
equation into the regularity of an elliptic system and univalent of
this mapping, see also \cite{heinz1956certain,lu2016weyl} for more
details. Another interesting proof using only maximum principle was
given by Chen-Han-Ou in \cite{chen2016interior}. Our new quantity
in \cite{guan2017interior} can give a new proof of Heinz. The restriction
for these methods is that we need some convexity conditions which
are not easily got in the higher dimension.

In $\mathbb{R}^{3}$, a key observation made in \cite{warren2009hessian}
is that equation (\ref{eq:specialLag}) is exactly the special Lagrangian
equation which stems from the special Lagrangian geometry \cite{harvey1982calibrated}.
And an important property for the special Lagrangian equation is that
the Lagrangian graph $(x,Du)\subset\mathbb{R}^{3}\times\mathbb{R}^{3}$
is a minimal submanifold which has mean value inequality and sobolev
inequality. So Warren-Yuan have proved interior $C^{1}$ estimate
for the special Lagrangian submanifold which in turn proved interior
$C^{2}$ estimate for the special Lagrangian equation. Our new observation
in this paper is that the graph $(X,\nu)$, where $X$ is position
vector of the hypersurface whose scalar curvature satisfy equation
(\ref{eq:sigma2}), can be viewed as a submanifold in $\mathbb{R}^{4}\times\mathbb{R}^{4}$
with bounded mean curvature. Then applying similar arguement of Michael-Simon
\cite{michael1973sobolev}, see also Hoffman-Spruck \cite{hoffman1974sobolev},
we have a mean value inequality in order to remove the convexity condition
in \cite{guan2017interior}. Finally, we apply a modified argument
of Warren-Yuan in \cite{warren2009hessian} to get the estimate.

At last, we remark that the arguments are higher co-dimensional analogous
to the original integral proof by Bombieri-De Giorgi-Miranda \cite{bombieri1969maggiorazione}
for the gradient estimate for co-dimension one minimal graph and by
Ladyzhenskaya and Ural'Tseva \cite{ladyzhenskaya1970local} for general
prescribed mean curvature equations. Here we use the method very similar
to Trudinger's simplified proof of gradient estimate for mean curvature
equations in \cite{trudinger1972new,trudinger1973gradient}.

The higher dimensional cases for these equations are still open to
us.

\section{Preliminary Lemmas}

We first introduce some definitions and notations.
\begin{defn}
\label{defsigma}For $\lambda=(\lambda_{1},\cdots,\lambda_{n})\in\mathbb{R}^{n}$,
the $k$- th elementary symmetric function $\sigma_{k}(\lambda)$
is defined as
\[
\sigma_{k}(\lambda):=\sum\lambda_{i_{1}}\cdots\lambda_{i_{k}},
\]
 where the sum is taken over for all increasing sequences $i_{1},\cdots,i_{k}$
of the indices chosen from the set $\{1,\cdots,n\}$. The definition
can be extended to symmetric matrices where $\lambda=(\lambda_{1},\cdots,\lambda_{n})$
are the corresponding eigenvalues of the symmetric matrices.
\end{defn}

For example, in $\mathbb{R}^{3}$
\[
\sigma_{2}(D^{2}u):=\sigma_{2}(\lambda(D^{2}u))=\lambda_{1}\lambda_{2}+\lambda_{1}\lambda_{3}+\lambda_{2}\lambda_{3}.
\]

\begin{defn}
For $1\leq k\leq n$, let $\Gamma_{k}$ be a cone in $\mathbb{R}^{n}$
determined by
\[
\Gamma_{k}=\{\lambda\in\mathbb{R}^{n}:\sigma_{1}(\lambda)>0,\cdots,\sigma_{k}(\lambda)>0\}.
\]
\end{defn}

The following lemma is from \cite{lin1994some}.
\begin{lem}
\label{lemma2} Suppose $\lambda\in\Gamma_{2}$ and $\sigma_{2}^{ii}(\lambda):=\frac{\partial\sigma_{2}}{\partial\lambda_{i}}$,
then there is a constant $c>0$ depending only on $n$ such that for
any $i$ from $1$ to $n$,
\[
\sigma_{2}^{ii}(\lambda)\geq\frac{c\sigma_{2}(\lambda)}{\sigma_{1}(\lambda)}.
\]
If we assume that $\lambda_{1}\ge\cdots\ge\lambda_{n}$, then there
exist $c_{1}>0$ and $c_{2}>0$ depending only on $n$ such that
\begin{equation}
\sigma_{2}^{11}(\lambda)\lambda_{1}\geq c_{1}\sigma_{2}(\lambda),\label{tildec0}
\end{equation}
and for any $j\geq2$
\begin{equation}
\sigma_{2}^{jj}(\lambda)\geq c_{2}\sigma_{1}(\lambda).\label{s222}
\end{equation}
\end{lem}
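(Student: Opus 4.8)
The plan is to reduce everything to explicit identities for $\sigma_2$ in terms of the eigenvalues and to exploit the structure of the cone $\Gamma_2$. First I would record the basic identity
\[
\sigma_2^{ii}(\lambda)=\frac{\partial\sigma_2}{\partial\lambda_i}=\sigma_1(\lambda)-\lambda_i,
\]
so that $\sum_i \sigma_2^{ii}(\lambda)=(n-1)\sigma_1(\lambda)$. This already shows each $\sigma_2^{ii}$ is a linear quantity, which is what makes the estimates tractable. For the first inequality, I would argue by contradiction/compactness, or more directly: since $\lambda\in\Gamma_2$ means $\sigma_1>0$ and $\sigma_2>0$, one wants a lower bound on $\sigma_1-\lambda_i$ relative to $\sigma_2/\sigma_1$. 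Multiplying through, the claim $\sigma_2^{ii}\sigma_1\ge c\,\sigma_2$ becomes $(\sigma_1-\lambda_i)\sigma_1\ge c\,\sigma_2$; using $\sigma_1^2=\sum_j\lambda_j^2+2\sigma_2$ one reduces to showing $(\sigma_1-\lambda_i)\sigma_1-c\sigma_2\ge 0$ on the cone, which is a homogeneous degree-two inequality and hence can be checked on the slice $\sigma_1=1$, a compact-after-normalization problem; the value of $c$ comes out depending only on $n$.

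For \eqref{tildec0}, assume $\lambda_1\ge\cdots\ge\lambda_n$. Then $\sigma_2^{11}=\sigma_1-\lambda_1=\lambda_2+\cdots+\lambda_n$. I expect the key point to be the standard fact (a consequence of $\lambda\in\Gamma_2$, or even just $\Gamma_1$ together with $\sigma_2>0$) that $\lambda_1>0$ and in fact $\lambda_1$ controls $\sigma_1$ from above up to a dimensional constant, i.e.\ $\sigma_1\le C(n)\lambda_1$ is automatic while the reverse-type estimate $\sigma_2^{11}\lambda_1=(\sigma_1-\lambda_1)\lambda_1\ge c_1\sigma_2$ needs proof. Writing $\sigma_2=\lambda_1(\lambda_2+\cdots+\lambda_n)+\sigma_2(\lambda_2,\ldots,\lambda_n)=\lambda_1\sigma_2^{11}+\sigma_2(\lambda_2,\ldots,\lambda_n)$, the inequality is equivalent to $(1-c_1)\lambda_1\sigma_2^{11}\ge c_1\,\sigma_2(\lambda_2,\ldots,\lambda_n)$. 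If $\sigma_2^{11}=\lambda_2+\cdots+\lambda_n\le 0$ one checks the right-hand side is also controlled (in that regime $\lambda_1$ is large and dominates), and if $\sigma_2^{11}>0$ one bounds $\sigma_2(\lambda_2,\ldots,\lambda_n)\le \tfrac{n-2}{2(n-1)}(\lambda_2+\cdots+\lambda_n)^2=\tfrac{n-2}{2(n-1)}(\sigma_2^{11})^2\le \tfrac{n-2}{2(n-1)}\lambda_1\sigma_2^{11}$ since $\sigma_2^{11}\le (n-1)\lambda_1$; choosing $c_1$ small enough relative to $n$ closes it.

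For \eqref{s222}, with $j\ge 2$ we have $\sigma_2^{jj}=\sigma_1-\lambda_j=\lambda_1+\cdots+\lambda_n-\lambda_j\ge \lambda_1$, since dropping the $j$-th term with $j\ge 2$ from $\sigma_1$ removes one of the smaller eigenvalues, and $\lambda_1>0$. More precisely $\sigma_2^{jj}=\lambda_1+\sum_{i\ge2,i\ne j}\lambda_i$. The cleanest route is: $2\sigma_2^{jj}=(\sigma_1-\lambda_j)+(\sigma_1-\lambda_j)\ge(\sigma_1-\lambda_1)+(\sigma_1-\lambda_j)$ and then bound $\sigma_1-\lambda_1\ge 0$ while combining with $\sigma_1-\lambda_j\ge$ something; alternatively, since $\sigma_2^{jj}\ge\sigma_2^{11}$ for $j\ge2$ it suffices to have $\sigma_2^{jj}\ge\frac12(\sigma_2^{jj}+\sigma_2^{11})$, and $\sigma_2^{11}+\sigma_2^{jj}=2\sigma_1-\lambda_1-\lambda_j\ge 2\sigma_1-2\lambda_1$; then one uses that $\sigma_1-\lambda_1$ together with $\lambda_1$ bounds $\sigma_1$. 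The main obstacle I anticipate is the borderline case in \eqref{tildec0} where $\lambda_2+\cdots+\lambda_n$ is negative (so $\sigma_2^{11}<0$ is impossible in $\Gamma_2$? — no, $\sigma_2^{11}$ can be negative only if $\lambda_1>\sigma_1$, which forces $\lambda_2+\cdots+\lambda_n<0$, still possible): one must verify that $\Gamma_2$ nonetheless forces $\lambda_1\sigma_2^{11}>0$, i.e.\ $\sigma_2^{11}>0$ on $\Gamma_2$, which follows because $0<\sigma_2=\lambda_1\sigma_2^{11}+\sigma_2(\lambda_2,\ldots,\lambda_n)$ and $\sigma_2(\lambda_2,\ldots,\lambda_n)\le 0$ would be needed to be compatible — handling this case carefully is where the dimensional constants are pinned down. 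Since the source \cite{lin1994some} is cited, I would simply present the short self-contained verification above rather than reproduce their argument.
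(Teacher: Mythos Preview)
The paper itself does not prove this lemma; it simply attributes it to Lin--Trudinger \cite{lin1994some} and quotes the statement. So there is no ``paper's proof'' to compare against, and any self-contained verification is welcome. Your arguments for the first inequality and for \eqref{tildec0} are essentially sound: the decomposition $\sigma_2=\lambda_1\sigma_2^{11}+\sigma_2(\lambda_2,\dots,\lambda_n)$ together with Maclaurin's inequality $\sigma_2(\lambda_2,\dots,\lambda_n)\le\frac{n-2}{2(n-1)}(\sigma_2^{11})^2$ and $\sigma_2^{11}\le(n-1)\lambda_1$ gives \eqref{tildec0} with $c_1=2/n$ once you know $\sigma_2^{11}>0$ on $\Gamma_2$ (which you correctly flag and which follows from the same Maclaurin bound). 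The first inequality then follows from \eqref{tildec0} since $\sigma_2^{ii}\ge\sigma_2^{11}$ and $\lambda_1<\sigma_1$; your compactness sketch is not quite enough by itself because the ratio $\sigma_2^{ii}/\sigma_2$ degenerates to $0/0$ on the boundary $\sigma_2=0$.

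Your treatment of \eqref{s222}, however, has a real gap. The opening claim $\sigma_2^{jj}=\sum_{i\ne j}\lambda_i\ge\lambda_1$ is false: for $n=3$, $\lambda=(3,1,-1)\in\Gamma_2$ gives $\sigma_2^{22}=\lambda_1+\lambda_3=2<3=\lambda_1$. The subsequent manipulations all reduce to $\sigma_2^{jj}\ge\sigma_2^{11}$, which is true but does not yield $\sigma_2^{jj}\ge c_2\sigma_1$ since $\sigma_2^{11}/\sigma_1$ can be arbitrarily small in $\Gamma_2$. A clean fix: from $\sigma_2^{11}>0$ one gets $\lambda_2>0$ (else $\lambda_2,\dots,\lambda_n\le 0$ forces $\sigma_2^{11}\le 0$), hence $2\lambda_2^2\le\lambda_1^2+\lambda_2^2\le\sum_i\lambda_i^2=\sigma_1^2-2\sigma_2<\sigma_1^2$, so $\lambda_2<\sigma_1/\sqrt{2}$ and therefore $\sigma_2^{jj}\ge\sigma_2^{22}=\sigma_1-\lambda_2>(1-1/\sqrt{2})\sigma_1$ for every $j\ge 2$.
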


So the curvature estimates can be reduced to the estimate of mean
curvature $H$ due to the following fact
\begin{equation}
\max|\lambda_{i}|\leq H=\sigma_{1}(\kappa).\label{eq:lap}
\end{equation}
In the rest of this article, we will denote $C$ to be constant under
control (depending only on $\|f\|_{C^{2}}$, $\|\frac{1}{f}\|_{L^{\infty}}$
and $\|M\|_{C^{1}}$), which may change line by line.

Suppose that a hypersurface $M$ in $\mathbb{R}^{n+1}$ can be written
as a graph over $B_{r}\subseteq\mbox{\ensuremath{\mathbb{R}}}^{n}$.
At any point of $x\in B_{1}$, the principal curvature $\kappa=(\lambda_{1},\lambda_{2},\cdots,\lambda_{n})$
of the graph $M=(x,u(x))$ satisfy a equation
\begin{equation}
\sigma_{2}(\kappa)=f(X,\nu)>0,\label{eq:graph-1}
\end{equation}
where $X$ is the position vector of $M$, and $\nu$ a normal vector
on $M$.

We choose an orthonormal frame in $\mathbb{R}^{n+1}$ such that $\{e_{1},e_{2},\cdots,e_{n}\}$
are tangent to $M$. Let $\nu$ is a normal on $M$ such that $H>0$.
We recall the following fundamental formulas of a hypersurface in
$\mathbb{R}^{n+1}$:
\begin{eqnarray*}
X_{ij} & = & -h_{ij}\nu\begin{array}{cc}
 & (Gauss\,\,formula)\end{array}\\
\nu_{i} & = & h_{ij}e_{j}\begin{array}{cc}
 & (Weingarten\,\,equation)\end{array}\\
h_{ijk} & = & h_{ikj}\begin{array}{cc}
 & (Codazzi\,\,equation)\end{array}\\
R_{ijkl} & = & h_{ik}h_{jl}-h_{il}h_{jk}\begin{array}{cc}
 & (Gauss\,\,equation)\end{array},
\end{eqnarray*}
where $R_{ijkl}$ is the curvature tensor. We also have the following
commutator formula:
\begin{eqnarray}
h_{ijkl}-h_{ijlk} & = & h_{im}R_{mjkl}+h_{mj}R_{mikl}.\label{eq:commute}
\end{eqnarray}

Combining Codazzi equation, Gauss equation and (\ref{eq:commute}),
we have
\begin{eqnarray}
h_{iikk}=h_{kkii}+\sum_{m}(h_{im}h_{mi}h_{kk}-h_{mk}^{2}h_{ii}).\label{eq:commute2}
\end{eqnarray}

For scalar curvature equation (\ref{eq:scalar}) with positive scalar
curvature, we may assume that $M$ is admissible in the following
definition without loss of generality.
\begin{defn}
A $C^{2}$ surface $M$ is called admissible if at every point $X\in M$,
its principal curvature satisfies
\[
\kappa\in\Gamma_{2}.
\]
Moreover, for any symmetric matrix $h_{ij}$, it follows from Lemma
\ref{lemma2} that $\sigma_{2}^{ij}:=\frac{\partial\sigma_{2}(\lambda(h_{ij}))}{\partial h_{ij}}$
is positive definite if $\lambda(h_{ij})\in\Gamma_{2}$.
\end{defn}

\begin{lem}
\label{lem1} Suppose the scalar curvature of hypersurface $M$ satisfies
equation (\ref{eq:graph-1}). In orthonormal coordinate, we have the
following equations
\begin{equation}
\sigma_{2}^{kl}h_{kli}=\nabla f(e_{i}),\label{eq:f1}
\end{equation}
and
\begin{eqnarray}
\sigma_{2}^{kl}h_{iikl}+\sum_{k\neq l}h_{kki}h_{lli}-\sum_{k\neq l}h_{kli}h_{kli}\nonumber \\
-2f\sum_{k}h_{ki}^{2}+(f\sigma_{1}-3\sigma_{3})h_{ii} & = & \nabla^{2}f(e_{i},e_{i}).\label{eq:f2}
\end{eqnarray}
 If $f$ is a form with gradient term, then there are estimates

\begin{equation}
|\nabla f|\leq C(1+H),\label{eq:gradient}
\end{equation}
and
\begin{equation}
-C(1+H)^{2}+\sum_{k}h_{ij}^{k}d_{\nu}f(e_{k})\leq\nabla^{2}f(e_{i},e_{j})\leq C(1+H)^{2}+\sum_{k}h_{ij}^{k}d_{\nu}f(e_{k}).\label{eq:Hessianf}
\end{equation}
\end{lem}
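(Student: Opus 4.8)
The plan is to derive equations \eqref{eq:f1} and \eqref{eq:f2} by differentiating the structure equation \eqref{eq:graph-1} and then to estimate the right-hand-side terms using the chain rule together with the fundamental hypersurface formulas. The computation is a standard but careful application of the Gauss, Weingarten, Codazzi and commutator identities listed just above.

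First I would establish \eqref{eq:f1}. Differentiating $\sigma_2(\kappa) = f(X,\nu)$ covariantly in the direction $e_i$ and using that $\sigma_2^{kl}$ is the derivative of $\sigma_2$ with respect to $h_{kl}$, the left-hand side is immediately $\sigma_2^{kl} h_{kli}$. For the right-hand side, write $\nabla_{e_i} f(X,\nu) = d_X f(X_i) + d_\nu f(\nu_i)$; by the Gauss formula $X_i = e_i$ and by the Weingarten equation $\nu_i = h_{ij} e_j$, so this equals $d_X f(e_i) + h_{ij} d_\nu f(e_j)$, which is exactly the covariant gradient $\nabla f(e_i)$ of the composed function along $M$. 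This gives \eqref{eq:f1}.

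Next, for \eqref{eq:f2}, I would differentiate \eqref{eq:f1} once more in $e_i$ and sum appropriately, or equivalently differentiate $\sigma_2(\kappa)=f$ twice. The second derivative of $\sigma_2$ produces the term $\sigma_2^{kl} h_{klii} + \sigma_2^{kl,pq} h_{kli} h_{pqi}$; using the explicit formula $\sigma_2^{kl,pq} h_{kli}h_{pqi} = \sum_{k\neq l}(h_{kki}h_{lli} - h_{kli}^2)$ in diagonal coordinates accounts for the quadratic-in-third-derivative terms. Then I would commute $h_{klii}$ to $h_{iikl}$ using \eqref{eq:commute2} (or \eqref{eq:commute}), which generates the curvature-type terms; contracting with $\sigma_2^{kl}$ and using $\sigma_2^{kl}h_{kl} = 2\sigma_2 = 2f$ and the Newton identities relating $\sigma_2^{kl}h_{km}h_{ml}$ to $f\sigma_1 - 3\sigma_3$ yields the terms $-2f\sum_k h_{ki}^2 + (f\sigma_1 - 3\sigma_3) h_{ii}$. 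For the right-hand side, differentiating $\nabla f(e_i)$ again and again invoking Gauss/Weingarten gives $\nabla^2 f(e_i,e_i)$ plus a term involving $h_{iik} d_\nu f(e_k)$; the $\nabla^2 f$ here is understood as the full second covariant derivative of $X\mapsto f(X,\nu(X))$ along $M$, which bundles the $\partial_X\partial_X f$, $\partial_X\partial_\nu f$, $\partial_\nu\partial_\nu f$ and the Hessian-of-$f$-on-the-sphere contributions. The bookkeeping of which terms land on which side of the equation is the one place to be careful.

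Finally, the estimates \eqref{eq:gradient} and \eqref{eq:Hessianf} follow by expanding these covariant derivatives explicitly and bounding every factor not involving $h_{ijk}$. From $\nabla f(e_i) = d_X f(e_i) + h_{ij} d_\nu f(e_j)$ and \eqref{eq:lap}, which gives $|h_{ij}| \le H$, we get $|\nabla f| \le \|f\|_{C^1}(1 + H) \le C(1+H)$. For the Hessian, differentiating once more produces: terms purely in the derivatives of $f$ (bounded by $C$), terms linear in $h_{ij}$ coming from differentiating $\nu$ once (bounded by $C(1+H)$), terms quadratic in $h_{ij}$ from differentiating $\nu$ twice or from products (bounded by $C(1+H)^2$), and exactly one term linear in the third derivative $h_{ij}^k$ of the form $h_{ij}^k d_\nu f(e_k)$, which is isolated on purpose since it cannot be absorbed into the $C^1$-data. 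Collecting everything gives the two-sided bound \eqref{eq:Hessianf}. The main obstacle, as usual in such computations, is not any single estimate but organizing the commutator step \eqref{eq:commute2} and the Newton-identity reductions so that the coefficients $-2f$ and $f\sigma_1 - 3\sigma_3$ come out with the right signs and multiplicities; everything else is a routine chain-rule expansion.
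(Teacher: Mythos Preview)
Your proposal is correct and follows essentially the same approach as the paper: differentiate the equation once and twice, commute $h_{klii}$ to $h_{iikl}$ via \eqref{eq:commute2}, reduce the contracted curvature terms using $\sigma_2^{ij}h_{ij}=2f$ and $\sigma_2^{kl}h_{mk}h_{ml}=\sigma_1\sigma_2-3\sigma_3$, and then expand $\nabla f$ and $\nabla^2 f$ via Gauss/Weingarten/Codazzi to obtain the estimates. The only minor wobble is your remark that differentiating $\nabla f(e_i)$ gives ``$\nabla^2 f(e_i,e_i)$ plus a term involving $h_{iik}d_\nu f(e_k)$'': in fact the $h_{ij}^k d_\nu f(e_k)$ term is already part of the intrinsic Hessian $\nabla^2 f(e_i,e_j)$ along $M$ (as you yourself note a line later), so no extra bookkeeping is needed there.
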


\begin{proof}
Taking twice differential of the equation $\sigma_{2}(\kappa)=f$
, we get (\ref{eq:f1}) and
\[
\sigma_{2}^{kl}h_{klii}+\sum_{k\neq l}h_{kki}h_{lli}-\sum_{k\neq l}h_{kli}h_{kli}=\nabla^{2}f(e_{i},e_{i}).
\]

Then we obtain (\ref{eq:f2}) by (\ref{eq:commute2}) and the following
elementary identities
\[
\sigma_{2}^{ij}h_{ij}=2f,
\]
and
\[
\sum_{m}\sigma_{2}^{kl}h_{mk}h_{ml}=\sigma_{1}\sigma_{2}-3\sigma_{3}.
\]

Moreover, by (\ref{eq:lap}), Codazzi equation and the following direct
computations
\[
\nabla f(e_{i})=d_{X}f(e_{i})+h_{i}^{k}d_{\nu}f(e_{k}),
\]

and
\begin{eqnarray*}
\nabla^{2}f(e_{i},e_{j}) & = & d_{X}^{2}f(e_{i},e_{j})+h_{j}^{k}d_{X,\nu}^{2}f(e_{i},e_{k})-h_{ij}d_{X}f(\nu)+h_{i}^{k}d_{\nu,X}^{2}f(e_{k},e_{j})\\
 &  & +h_{i}^{k}h_{j}^{l}d_{\nu}^{2}f(e_{k},e_{l})-h_{i}^{k}h_{kj}d_{\nu}f(\nu)+h_{ij}^{k}d_{\nu}f(e_{k}),
\end{eqnarray*}

we get the estimates (\ref{eq:gradient}) and (\ref{eq:Hessianf}).
\end{proof}
We recall some elementary facts about hypersurface. Denoting $W=\sqrt{1+\lvert Du\rvert^{2}}$,
the second fundamental form and the first fundamental form of the
hypersurface can be written in local coordinate as $h_{ij}=\frac{u_{ij}}{W}$
and $g_{ij}=\delta_{ij}+u_{i}u_{j}$. The inverse of the first fundamental
form and the Weingarten Curvature are $g^{ij}=\delta_{ij}-\frac{u_{i}u_{j}}{W^{2}}$
and $h_{i}^{j}=D_{i}(\frac{u_{j}}{W})$.
\begin{defn}
The Newton transformation tensor is defined as
\[
[T_{k}]_{i}^{j}:=\frac{1}{k!}\delta_{jj_{1}\cdots j_{k}}^{ii_{1}\cdots i_{k}}h_{j_{1}}^{i_{1}}\cdots h_{j_{k}}^{i_{k}},
\]
and the corresponding $(2,0)$-tensor is defined as
\[
[T_{k}]^{ij}:=[T_{k}]_{k}^{i}g^{kj}.
\]
\end{defn}

From this definition one can easily show a divergence free identity
\[
\sum_{j}\partial_{j}[T_{k}]_{i}^{j}=0.
\]

\begin{lem}
There is a family of elementary relations between $\sigma_{k}$ opertators
and Newton transformation tensors
\end{lem}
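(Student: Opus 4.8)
The relations referred to here are the standard ones: the recursion
\[
[T_{k}]_{i}^{j}=\sigma_{k}\delta_{i}^{j}-h_{i}^{l}[T_{k-1}]_{l}^{j},\qquad[T_{0}]_{i}^{j}=\delta_{i}^{j},
\]
the trace identity $[T_{k}]_{i}^{i}=(n-k)\sigma_{k}$, the contraction $[T_{k}]_{i}^{j}h_{j}^{i}=(k+1)\sigma_{k+1}$, and the differentiation rule $\frac{\partial\sigma_{k+1}}{\partial h_{j}^{i}}=[T_{k}]_{i}^{j}$; these are the ``elementary relations'' used in the sequel, together with the divergence-free identity already recorded. The plan is to prove all of them at once by reducing to pointwise identities among elementary symmetric functions of the eigenvalues of the Weingarten map. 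Since every assertion is tensorial, one may fix a point and choose a $g$-orthonormal frame in which $h_{i}^{j}=\lambda_{i}\delta_{i}^{j}$; then $[T_{k}]_{i}^{j}$ is diagonal with $[T_{k}]_{i}^{i}=\frac{\partial\sigma_{k+1}}{\partial\lambda_{i}}=\sigma_{k}(\lambda\,|\,i)$, the $k$-th symmetric function of the eigenvalues with $\lambda_{i}$ deleted.

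First I would establish the differentiation rule directly from the polarized definition $\sigma_{k+1}=\frac{1}{(k+1)!}\delta_{j_{0}j_{1}\cdots j_{k}}^{i_{0}i_{1}\cdots i_{k}}h_{i_{0}}^{j_{0}}\cdots h_{i_{k}}^{j_{k}}$: differentiating in one factor and using total antisymmetry of the generalized Kronecker symbol shows each of the $k+1$ factors contributes equally, landing exactly on $\frac{1}{k!}\delta_{j_{0}j_{1}\cdots j_{k}}^{i_{0}i_{1}\cdots i_{k}}h_{i_{1}}^{j_{1}}\cdots h_{i_{k}}^{j_{k}}=[T_{k}]_{i_{0}}^{j_{0}}$. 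The trace identity then follows from the contraction formula $\delta_{ij_{1}\cdots j_{k}}^{ii_{1}\cdots i_{k}}=(n-k)\,\delta_{j_{1}\cdots j_{k}}^{i_{1}\cdots i_{k}}$, which in the diagonal picture is just $\sum_{i}\sigma_{k}(\lambda\,|\,i)=(n-k)\sigma_{k}(\lambda)$; the contraction $[T_{k}]_{i}^{j}h_{j}^{i}=(k+1)\sigma_{k+1}$ is Euler's identity $\sum_{i}\lambda_{i}\,\partial_{\lambda_{i}}\sigma_{k+1}=(k+1)\sigma_{k+1}$; and the recursion is the classical splitting $\sigma_{k}(\lambda)=\lambda_{i}\,\sigma_{k-1}(\lambda\,|\,i)+\sigma_{k}(\lambda\,|\,i)$ read diagonally, with the tensorial form obtained either by expanding the Kronecker symbol in $[T_{k}]$ along its first index pair or by polarization. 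The divergence-free identity $\partial_{j}[T_{k}]_{i}^{j}=0$ is then a consequence of the Codazzi equations, since differentiating $[T_{k}]_{i}^{j}$ and symmetrizing over the contracted derivative kills all terms by antisymmetry of $\delta$.

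The only genuine work is combinatorial bookkeeping: tracking the factors $\frac{1}{k!}$, the count $k+1$ of equal contributions in the differentiation step, and the factor $n-k$ in the trace. The point I expect to require the most care is that $[T_{k}]^{ij}=[T_{k}]_{i}^{l}g^{lj}$ carries an extra inverse-metric factor, so its symmetry in $i,j$ must be justified from the symmetry of the second fundamental form and of $g$, rather than taken for granted; apart from that, the reduction to the eigenvalue picture makes each identity immediate, and the generalized-Kronecker manipulations can be confined to a remark for readers who prefer the frame-free derivation.
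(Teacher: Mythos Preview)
Your proposal is correct, but it both over-reads the lemma and argues differently from the paper. The paper's lemma only records the two recursion formulas
\[
[T_{k}]_{i}^{j}=\sigma_{k}\delta_{i}^{j}-[T_{k-1}]_{i}^{l}h_{l}^{j},\qquad [T_{k}]_{i}^{j}=\sigma_{k}\delta_{i}^{j}-[T_{k-1}]_{l}^{j}h_{i}^{l},
\]
and the symmetry $[T_{k}]^{ij}=[T_{k}]^{ji}$; the trace, contraction, and differentiation identities you add are true but not part of what is being claimed here, and the divergence-free identity was already stated separately just before the lemma.

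On method: the paper proves the recursion by expanding the generalized Kronecker symbol in $[T_{k}]_{i}^{j}=\frac{1}{k!}\delta_{jj_{1}\cdots j_{k}}^{ii_{1}\cdots i_{k}}h_{j_{1}}^{i_{1}}\cdots h_{j_{k}}^{i_{k}}$ along its first index pair, obtaining both forms directly, and then proves the symmetry of $[T_{k}]^{ij}$ by induction on $k$, using the two recursion formulas in tandem to compare $[T_{m+1}]^{ij}$ and $[T_{m+1}]^{ji}$. You instead fix a point, choose a $g$-orthonormal eigenframe for $h$, and read everything off the classical splitting $\sigma_{k}(\lambda)=\lambda_{i}\sigma_{k-1}(\lambda\,|\,i)+\sigma_{k}(\lambda\,|\,i)$; symmetry is then automatic since $[T_{k}]_{i}^{j}$ is diagonal in that frame. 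Your route is shorter and makes the combinatorics transparent, at the cost of invoking tensoriality to pass from the diagonal frame to an arbitrary one; the paper's route is frame-free throughout and makes explicit why the \emph{two} composition orders $[T_{k-1}]\cdot h$ and $h\cdot [T_{k-1}]$ both arise, which is exactly what drives its induction for the symmetry. Either argument is perfectly adequate for the uses later in the paper.
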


\begin{eqnarray}
[T_{k}]_{i}^{j} & = & \sigma_{k}\delta_{i}^{j}-[T_{k-1}]_{i}^{l}h_{l}^{j}.\label{eq:Tk1}
\end{eqnarray}

or
\begin{equation}
[T_{k}]_{i}^{j}=\sigma_{k}\delta_{i}^{j}-[T_{k-1}]_{l}^{j}h_{i}^{l}.\label{eq:Tk2}
\end{equation}

Moreover, the $(2,0)$-tensor of $T_{k}$ is symmetry such that
\begin{equation}
[T_{k}]^{ij}=[T_{k}]^{ji}.\label{eq:Tij}
\end{equation}

\begin{proof}
We only prove the first one, because the second one is similar. From
Definition \ref{defsigma}, it is easy to check that
\begin{equation}
\sigma_{k}(\kappa)=\frac{1}{k!}\delta_{j_{1}\cdots j_{k}}^{i_{1}\cdots i_{k}}h_{j_{1}}^{i_{1}}\cdots h_{j_{k}}^{i_{k}}.\label{eq:matrix}
\end{equation}

By definition and (\ref{eq:matrix}), we obtain (\ref{eq:Tk1}) as
follows:
\begin{eqnarray*}
[T_{k}]_{i}^{j} & = & \frac{1}{k!}\delta_{jj_{1}\cdots j_{k}}^{ii_{1}\cdots i_{k}}h_{j_{1}}^{i_{1}}\cdots h_{j_{k}}^{i_{k}}\\
 & = & \frac{1}{k!}\delta_{j_{1}\cdots j_{k}}^{i_{1}\cdots i_{k}}h_{j_{1}}^{i_{1}}\cdots h_{j_{k}}^{i_{k}}\delta_{i}^{j}-\frac{1}{(k-1)!}\delta_{j_{1}j_{2}\cdots j_{k}}^{ii_{2}\cdots i_{k}}h_{j_{1}}^{j}h_{j_{2}}^{i_{2}}\cdots h_{j_{k}}^{i_{k}}\\
 & = & \sigma_{k}\delta_{i}^{j}-[T_{k-1}]_{i}^{k}h_{k}^{j}.
\end{eqnarray*}

For $k=1$, the symmetry of the $(2,0)$-tensor of $T_{1}$ is obviously
from the symmetry of $h$. Inductively, we assume the symmetry of
$(2,0)$-tensor $T_{k}$ is true when $k=m$. From (\ref{eq:Tk1}),
we have
\begin{eqnarray*}
[T_{m+1}]^{ij} & = & [T_{m+1}]_{l}^{i}g^{lj}=\sigma_{m+1}\delta_{l}^{i}g^{lj}-[T_{m}]_{l}^{p}h_{p}^{i}g^{lj}\\
 & = & \sigma_{m+1}g^{ij}-[T_{m}]^{pj}h_{p}^{i}.
\end{eqnarray*}

On the other hand, by (\ref{eq:Tk2}) we have
\begin{eqnarray*}
[T_{m+1}]^{ji} & = & [T_{m+1}]_{l}^{j}g^{li}=\sigma_{m+1}\delta_{l}^{j}g^{li}-[T_{m}]_{p}^{j}h_{l}^{p}g^{li}\\
 & = & \sigma_{m+1}g^{ji}-[T_{m}]_{p}^{j}h^{pi}\\
 & = & \sigma_{m+1}g^{ji}-[T_{m}]^{jp}h_{p}^{i}.
\end{eqnarray*}

So from the symmetry of $g$ and $T_{m}$, we have proved (\ref{eq:Tij}).
\end{proof}
\begin{lem}
\label{lem:areaEstimate} If $u$ satisfies the scalar equation (\ref{eq:scalar}),
then the following integral is bounded
\[
\int_{B_{r}(x_{0})}(\sigma_{1}f-\sigma_{3})dx\leq C,
\]

where $C$ depends only on $\lVert f\rVert_{C^{1}(B_{r+1}(x_{0}))}$
and $\lVert u\rVert_{C^{1}(B_{r+1}(x_{0}))}$.
\end{lem}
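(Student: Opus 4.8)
The plan is to show that $\sigma_1 f - \sigma_3 = \sigma_1 \sigma_2 - 3\sigma_3 = \sum_{k,l,m} \sigma_2^{kl} h_{mk} h_{ml}$ (using the elementary identity quoted in the proof of Lemma~\ref{lem1}) can be written as a divergence of a controlled vector field, so that the integral over $B_r(x_0)$ reduces by the divergence theorem to a boundary term bounded by $C$-data. The natural candidate is built from the Newton tensor $T_1$: since $[T_1]_i^j$ is divergence-free, $\sum_{i,j}\partial_j\bigl([T_1]_i^j\, u_i\bigr) = \sum_{i,j}[T_1]_i^j u_{ij}$, and one checks that the right side is (up to a factor of $W$ and lower-order terms) exactly $\sigma_1 \sigma_2 - 3\sigma_3$ or a closely related expression. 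More precisely, I would look for the right combination: $\sigma_2$ itself is a divergence, $\sigma_2 = \frac{1}{2}\sum\partial_j\bigl([T_1]_i^j \tfrac{u_i}{W}\bigr)$-type identity, and then contracting once more with the second fundamental form, using the structure $[T_2]^{ij}$ with $\sum_j \partial_j [T_2]_i^j = 0$, produces $\sigma_1\sigma_2 - 3\sigma_3$ as $\sum_{i,j}\partial_j\bigl([T_2]_i^j\,(\text{something linear in }Du)\bigr)$ plus terms involving $\nabla f$ from equation~\eqref{eq:f1}.

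Concretely, here are the steps in order. First, express $\sigma_1 f - \sigma_3$ purely in terms of $h_i^j$ using $f = \sigma_2(\kappa)$, getting $\sigma_1\sigma_2 - 3\sigma_3 = \sum \sigma_2^{kl}h_{mk}h_{ml}$. Second, rewrite this using the Newton tensor relations~\eqref{eq:Tk1}--\eqref{eq:Tk2}; since $\sigma_2^{kl}$ as a matrix equals $[T_1]^{kl}$ in an orthonormal-at-a-point sense, the expression becomes a full contraction of $T_1$ against $h\cdot h$, which by the Newton identity $[T_2]_i^j = \sigma_2\delta_i^j - [T_1]_i^l h_l^j$ reorganizes into $\sigma_1\sigma_2 - \mathrm{tr}(T_1 h^2)$-type terms that telescope. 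Third, invoke the divergence-free property $\sum_j \partial_j [T_k]_i^j = 0$ to write the resulting scalar density (in the flat $x$-coordinates, remembering $d\mathrm{vol}_g = W\,dx$ and $h_i^j = D_i(u_j/W)$) as $\sum_{i,j}\partial_j\bigl([T_2]_i^j u_i\bigr)$ plus an error that pairs $\nabla f$ with $Du$ and is therefore $\le C(1 + \text{lower order})$; here equation~\eqref{eq:f1} controls the $\nabla f$ piece. Fourth, integrate over $B_r(x_0)$, apply the divergence theorem, and bound the boundary integral of $[T_2]_i^j u_i \nu_j$ over $\partial B_r(x_0)$. Since $T_2$ is quadratic in $h_i^j$ and $h$ need not be bounded, I would instead integrate over $B_\rho$ for $\rho \in (r, r+1)$, average in $\rho$, and use the coarea formula so that the boundary term becomes $\int_{B_{r+1}\setminus B_r}|T_2||Du|\,dx$; this still involves $|h|^2$, so the cleaner route is to pick the divergence structure so that only $[T_1]$ (linear in $h$, hence linear in $D^2u/W$) appears at the boundary, i.e. realize $\sigma_1 f - \sigma_3$ as $\sum\partial_j(\text{quadratic in }Du, \text{linear in }D^2u)$ that integrates by parts twice to leave only first derivatives — this is exactly the phenomenon that makes $\int \sigma_2$ and $\int(\sigma_1\sigma_2-3\sigma_3)$ a priori bounded for graphs.

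The main obstacle is the second integration by parts: a single application of the divergence-free identity for $T_2$ leaves a boundary term still quadratic in curvature, which is not yet controlled, so one must find the genuinely double-divergence structure. The resolution is to use both Newton identities~\eqref{eq:Tk1} and~\eqref{eq:Tk2} together with the symmetry~\eqref{eq:Tij} of $[T_2]^{ij}$: writing $\sigma_1\sigma_2 - 3\sigma_3$ as the divergence of $[T_2]^{ij}\partial_i(\text{position-type function})$ and then moving one more derivative onto $[T_2]$ (which vanishes in divergence) transfers everything onto the smooth ``test'' function, leaving a bulk term that is a contraction of $T_1$ (linear in $h$) with $D^2(\text{smooth})$ — bounded by $C(1+H)$ pointwise — and that term is itself, via~\eqref{eq:f1} and one more integration by parts, reduced to boundary data plus $\int|\nabla f||Du| \le C$. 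I expect the bookkeeping of the $W$-factors and the conversion between $h_{ij}$ (orthonormal frame) and $D_i(u_j/W)$ (graph coordinates) to be the technically fussiest part, but no single step should be deep once the correct divergence identity is identified.
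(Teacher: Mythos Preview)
Your overall mechanism---divergence-free Newton tensors plus repeated integration by parts against a cutoff---is exactly what the paper does. Two concrete corrections, though, without which your write-up will not close.

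First, the opening identity is wrong: $\sigma_1 f-\sigma_3=\sigma_1\sigma_2-\sigma_3$, not $\sigma_1\sigma_2-3\sigma_3$. The expression $\sigma_1\sigma_2-3\sigma_3=\sum_m\sigma_2^{kl}h_{mk}h_{ml}$ is a different quantity (it shows up in Lemma~\ref{lem1} but is not the integrand here). The paper sidesteps this entirely by treating the two pieces separately: $\int f\sigma_1\,dx$ is controlled by a single integration by parts on $\sigma_1=D_i(u_i/W)$, while $-3\int\sigma_3\,dx=-\int[T_2]_i^j D_j(u_i/W)\,dx$ is where the real work is.

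Second, drop the boundary-integral/coarea detour and use a cutoff $\phi\in C_0^\infty(B_{r+1})$ with $\phi\equiv1$ on $B_r$; then no boundary term ever appears and you never face an uncontrolled $|T_2|$ on $\partial B_\rho$. After one integration by parts on $\int\phi^2[T_2]_i^jD_j(u_i/W)\,dx$ you get $\int\phi[T_2]_i^j\phi_j\,u_i/W\,dx$; now apply $[T_2]_i^j=\sigma_2\delta_i^j-[T_1]_i^kh_k^j$ to peel off a bounded $\sigma_2=f$ piece, and integrate by parts once more on the remaining $[T_1]_i^kD_k(u_j/W)$ factor. What survives is $[T_1]$ (linear in $h$, hence $\le C\sigma_1$ by \eqref{eq:lap}) contracted with second derivatives of the smooth data $\phi\,u_j/W$, together with the trace $[T_1]_i^kD_k(u_i/W)=2\sigma_2=2f$. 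Both are controlled by the already-established bound $\int\sigma_1\,dx\le C$ and by $\|f\|_{C^1}$. No appeal to \eqref{eq:f1} or to $\nabla f$ on the hypersurface is needed anywhere in this lemma; that machinery enters only later.
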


\begin{proof}
For a non-negative function $\phi\in C_{0}^{\infty}(B_{r+1}(x_{0}))$
with $|\nabla\phi|+|\nabla^{2}\phi|\leq C$, we assume that $\phi\equiv1$
in $B_{r}(x_{0})$ and $0\leq\phi\leq1$ in $B_{r+1}(x_{0})$. It
is obvious that the first part of the integral is bounded as follows
\begin{eqnarray*}
\int_{B_{r}(x_{0})}f\sigma_{1}dx\leq C\int_{B_{r+1}(x_{0})}\phi^{2}\sigma_{1}dx & = & C\int\phi^{2}div(\frac{Du}{W})dx\\
 & = & C\int-\sum_{i}(\phi^{2})_{i}\frac{u_{i}}{W}dx\\
 & \leq & C.
\end{eqnarray*}

Then we estimate the second part
\begin{eqnarray*}
3\int_{B_{r}(x_{0})}-\sigma_{3}dx & \leq & -3\int_{B_{r+1}(x_{0})}\phi^{2}\sigma_{3}dx\\
 & = & -\int\phi^{2}[T_{2}]_{i}^{j}D_{j}(\frac{u_{i}}{W})dx\\
 & = & 2\int\phi[T_{2}]_{i}^{j}\phi_{j}\frac{u_{i}}{W}dx.
\end{eqnarray*}

Using (\ref{eq:Tk1}), we continue our estimate
\begin{eqnarray*}
\int\phi[T_{2}]_{i}^{j}\phi_{j}\frac{u_{i}}{W}dx & = & \int\phi\phi_{i}\frac{u_{i}}{W}\sigma_{2}dx-\int\phi[T_{1}]_{i}^{k}\phi_{j}\frac{u_{i}}{W}D_{k}(\frac{u_{j}}{W})dx\\
 & \leq & C+\int[T_{1}]_{i}^{k}(\phi\phi_{j})_{k}\frac{u_{i}}{W}\frac{u_{j}}{W}dx+\int[T_{1}]_{i}^{k}D_{k}(\frac{u_{i}}{W})\phi\phi_{j}\frac{u_{j}}{W}dx\\
 & \leq & C,
\end{eqnarray*}

where we have used (\ref{eq:lap}) and the scalar curvature equation
(\ref{eq:scalar}) in the last inequality.
\end{proof}

\section{An important differential inequality}

Let us consider the quantity of $b(x):=\log\sigma_{1}$. In dimension
three, we have a very important differential inequality.
\begin{lem}
\label{lem3} For admissible solutions of the equation (\ref{eq:scalar})
in $\mathbb{R}^{3}$, we have
\begin{equation}
\sigma_{2}^{ij}b{}_{ij}\geq\frac{1}{100}\sigma_{2}^{ij}b{}_{i}b{}_{j}-C(f\sigma_{1}-\sigma_{3})+g^{ij}b_{i}d_{\nu}f(e_{j}),\label{eq:logb}
\end{equation}
where $C$ depends only on $\lVert f\rVert_{C^{2}}$, $\lVert\frac{1}{f}\rVert_{L^{\infty}}$
and $\lVert u\rVert_{C^{1}}$.
\end{lem}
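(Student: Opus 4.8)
The plan is to compute $\sigma_2^{ij}b_{ij}$ directly from the definition $b = \log\sigma_1$ and feed in the second-order curvature identity \eqref{eq:f2} from Lemma~\ref{lem1}. First I would write $b_i = \sigma_{1,i}/\sigma_1 = \sum_k h_{kki}/\sigma_1$ and
\[
\sigma_2^{ij}b_{ij} = \frac{1}{\sigma_1}\,\sigma_2^{ij}\sigma_{1,ij} - \frac{1}{\sigma_1^2}\,\sigma_2^{ij}\sigma_{1,i}\sigma_{1,j} = \frac{1}{\sigma_1}\,\sigma_2^{ij}\Big(\sum_k h_{kkij}\Big) - \sigma_2^{ij}b_ib_j.
\]
Summing \eqref{eq:f2} over $i$ (after relabelling so that the Laplacian-type term $\sigma_2^{kl}h_{iikl}$ appears) produces $\sigma_2^{kl}\sum_i h_{iikl}$ together with the ``good'' and ``bad'' cubic terms $\sum_i\sum_{k\neq l}h_{kki}h_{lli}$, $-\sum_i\sum_{k\neq l}h_{kli}^2$, the term $-2f\sum_{i,k}h_{ki}^2 = -2f|A|^2$, the term $(f\sigma_1-3\sigma_3)\sigma_1$, and $\sum_i\nabla^2 f(e_i,e_i)$. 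Dividing by $\sigma_1$, the structural terms that must be controlled are the ratio of cubic curvature terms against $\sigma_2^{ij}b_ib_j$, and everything else should be absorbable into $C(f\sigma_1-\sigma_3)$ using Lemma~\ref{lemma2}, \eqref{eq:lap}, $\sigma_2 = f$ bounded, and the bounds \eqref{eq:gradient}, \eqref{eq:Hessianf} on $f$ and its derivatives. The first-order term $g^{ij}b_i d_\nu f(e_j)$ on the right of \eqref{eq:logb} is exactly the leftover from the gradient-of-$f$ contribution after applying \eqref{eq:f1} and \eqref{eq:Hessianf} (the $h_{ij}^k d_\nu f(e_k)$ piece), so that term is essentially bookkeeping rather than a difficulty.

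The crux is the cubic-terms inequality in dimension three: I need to show that, after dividing by $\sigma_1$,
\[
\frac{1}{\sigma_1}\Big(\sum_i\sum_{k\neq l}h_{kki}h_{lli} - \sum_i\sum_{k\neq l}h_{kli}^2\Big) \;\geq\; \Big(\tfrac{1}{100}+1\Big)\sigma_2^{ij}b_ib_j - C(f\sigma_1-\sigma_3),
\]
since the $-\sigma_2^{ij}b_ib_j$ coming from the log and the $+\tfrac{1}{100}\sigma_2^{ij}b_ib_j$ target together demand a coefficient slightly above $1$. The standard device is to work at a point, diagonalize $h$ with $\lambda_1\geq\lambda_2\geq\lambda_3$, and split the sum over the index $i$ of differentiation. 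Using the Codazzi symmetry $h_{ijk}=h_{ikj}$, the term $\sum_{k\neq l}h_{kki}h_{lli} = \sigma_{1,i}^2 - \sum_k h_{kki}^2$, so $\sum_i\sum_{k\neq l}h_{kki}h_{lli} = |\nabla\sigma_1|^2 - \sum_{i,k}h_{kki}^2$, where $|\nabla\sigma_1|^2 = \sum_i\sigma_{1,i}^2$ is \emph{not} the same as $\sigma_2^{ij}\sigma_{1,i}\sigma_{1,j}$; the comparison between the Euclidean gradient-squared of $\sigma_1$ and its $\sigma_2^{ij}$-weighted version, together with the diagonal entries $\sigma_2^{ii}$ behaving like $\lambda_2+\lambda_3,\ \lambda_1+\lambda_3,\ \lambda_1+\lambda_2$, is where the dimension-three arithmetic gets delicate. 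I expect to separate the ``big'' direction $i=1$ (where $\lambda_1$ is large and $\sigma_2^{11}\lambda_1\geq c_1\sigma_2$ by \eqref{tildec0} is the relevant bound) from directions $i=2,3$ (where $\sigma_2^{jj}\geq c_2\sigma_1$ by \eqref{s222}), and in each block bound the negative $h_{kli}^2$ terms against the positive ones plus a controllable remainder. A typical move is: for the mixed third derivatives $h_{kli}$ with all indices distinct, there is a lot of symmetry (each appears several times among the $-h_{kli}^2$ terms), giving extra room; for $h_{iii}$ and $h_{iik}$ terms one uses the $\sigma_1$ in the denominator and the fact that $\lambda_1\sim\sigma_1$ in the degenerate regime.

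I would therefore organize the proof as: (1) reduce to a pointwise inequality at a fixed point with $h$ diagonalized; (2) substitute the summed form of \eqref{eq:f2} for $\sigma_2^{kl}\sum_i h_{iikl}$, producing the cubic terms plus lower-order terms; (3) show all lower-order terms (those involving $f$, $\nabla f$, $\nabla^2 f$, $|A|^2$, $f\sigma_1-3\sigma_3$, $\sigma_3$) are $\geq -C(f\sigma_1-\sigma_3)$ up to the designated $g^{ij}b_id_\nu f(e_j)$ term, using \eqref{eq:lap}, Lemma~\ref{lemma2}, $\sigma_2=f$, $\sigma_3$ bounded in terms of $\sigma_1$ via $\sigma_1\sigma_2-3\sigma_3 = \sigma_2^{kl}h_{mk}h_{ml}\geq 0$ and $f\sigma_1-\sigma_3\geq \tfrac23 f\sigma_1\geq 0$ (so it dominates $f$, $\sigma_3$, and after a Cauchy–Schwarz split also $|A|$ and $|A|^2$ when paired against a small multiple of the cubic good term); and (4) prove the key cubic inequality
\[
\frac{1}{\sigma_1}\sum_i\sum_{k\neq l}h_{kki}h_{lli} - \frac{1}{\sigma_1}\sum_i\sum_{k\neq l}h_{kli}^2 + C(f\sigma_1-\sigma_3) \;\geq\; \tfrac{101}{100}\,\sigma_2^{ij}b_ib_j
\]
by the case analysis on $\lambda_1$ large versus all $\lambda_i$ comparable, absorbing cross terms by Cauchy–Schwarz with small constants. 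Step (4) is the main obstacle: it is the genuinely three-dimensional algebraic heart of the argument, and getting a coefficient strictly bigger than $1$ (here $\tfrac{101}{100}$) rather than merely $\geq 1$ is precisely what forces a careful, rather than crude, estimate of the mixed third-derivative terms.
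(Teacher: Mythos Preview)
Your overall architecture (steps (1)--(3)) is right and matches the paper, but step (4) has two problems.

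First, a sign slip: when you solve \eqref{eq:f2} for $\sigma_2^{kl}\sum_i h_{iikl}$ and insert into $\sigma_2^{ij}b_{ij}$, the cubic contribution is $\tfrac{1}{\sigma_1}\big(\sum_i\sum_{k\neq l}h_{kli}^2 - \sum_i\sum_{k\neq l}h_{kki}h_{lli}\big)$, not the other way around. The off-diagonal $h_{kli}^2$ is the favorable term; your displayed target inequality in step (4) has the signs reversed.

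Second, and more substantively, your plan for step (4) --- ``case analysis on $\lambda_1$ large versus all $\lambda_i$ comparable, absorbing cross terms by Cauchy--Schwarz'' --- omits the key algebraic device and, as written, will not produce a coefficient strictly greater than $1$ in front of $\sigma_2^{ij}b_ib_j$. The paper does \emph{not} split into regimes. Instead it uses the first-order differentiated equation \eqref{eq:f1}, which in a diagonal frame reads $\sum_k \sigma_2^{kk}h_{kki}=f_i$, to \emph{eliminate} the diagonal third derivatives $h_{iii}$ in favor of the two off-diagonal ones $h_{jji}$, $j\neq i$. After this substitution, for each fixed $i$ the expression to control is a quadratic form in just two variables $(h_{jji})_{j\neq i}$ (the fully mixed $h_{123}^2$ appears with a manifestly favorable sign), and the whole inequality reduces to showing that a concrete $2\times 2$ coefficient matrix built from $\lambda_1,\lambda_2,\lambda_3$ is positive semidefinite up to an error $C(f\sigma_1-\sigma_3)$. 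This is done by two explicit determinant computations (Claims~\ref{claim1} and \ref{claim2}) valid uniformly for all $\lambda\in\Gamma_2$. Without first eliminating $h_{iii}$, you face a $3\times 3$ indefinite form in each direction; Cauchy--Schwarz applied termwise loses exactly the margin you need, which is why you correctly sensed that getting $\tfrac{101}{100}$ rather than $1$ is delicate. The missing idea is precisely this use of \eqref{eq:f1}.
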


\begin{rem*}
We do not know whether the corresponding higher dimensional inequalties
(\ref{eq:logb}) hold or not. This is one of the difficulty to generalize
our thereom in higher dimensions.
\end{rem*}
\begin{proof}
The calculation was done in Lemma 3 of \cite{qiu2017interior}. We
give its details in the appendix.
\end{proof}

\section{Mean value inequality.}

In this section we prove a mean value type inequality. So we can transform
the pointwise estimate into the integral estimate which is easier
to deal with. It is unclear for higher dimensional scalar curvature
equations. This is the second difficulty to generalize our theorem
in higher dimensions.
\begin{thm}
\label{thm-meanvalue} Suppose $u$ are admissible solutions of equation
(\ref{eq:scalar}) on $B_{10}\subset\mathbb{R}^{3}$, then we have
for any $y_{0}\in B_{2}$
\end{thm}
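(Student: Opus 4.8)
The assertion is a mean value inequality, transferring the pointwise size of the mean curvature $H=\sigma_1$ at $y_0$ to an integral of $b=\log\sigma_1$ over a fixed ball around $y_0$; its role, as the section heading indicates, is to replace a pointwise estimate by an integral one. I would prove it by (I) exhibiting $N=(X,\nu)\subset\mathbb{R}^4\times\mathbb{R}^4$ as a submanifold of bounded mean curvature whose induced metric is uniformly comparable to the natural metric of the operator $\sigma_2^{ij}$, (II) turning the differential inequality of Lemma \ref{lem3} into a subsolution inequality for an operator uniformly elliptic relative to $N$ with bounded inhomogeneity, and (III) invoking the Michael--Simon (cf. Hoffman--Spruck) Sobolev inequality on $N$ together with the De Giorgi--Moser iteration.

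\emph{Step I.} One may assume $\sigma_1\ge 1$ on the region in question, for otherwise $\max_i|\lambda_i|\le\sigma_1\le 1$ already; then $b\ge 0$. Two elementary facts about $\Gamma_2$ in dimension three are available: $(\sigma_1-\lambda_i)(\lambda_i^2+f)=\sigma_1 f-\sigma_3$ for every $i$ (so $\sigma_2^{ij}$ has eigenvalues $(\sigma_1 f-\sigma_3)/(\lambda_i^2+f)$), and $\sigma_1 f-\sigma_3\ge c\,\sigma_1$ on $\{\sigma_1\ge 1\}$. The induced metric of $N=(X,\nu)$ has eigenvalues $1+\lambda_i^2$ in the principal frame, while the metric for which $\sigma_2^{ij}\partial_i\partial_j$ is a multiple of the Laplace--Beltrami operator has eigenvalues $\lambda_i^2+f$; since $1/C\le f\le C$ these are uniformly comparable. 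The essential point is that $N$ has bounded mean curvature: computing the mean curvature vector of $(X,\nu)$ from the Gauss and Weingarten formulas and contracting with $\sigma_2^{ij}$, so that the trace identities (\ref{eq:f1})--(\ref{eq:f2}) of Lemma \ref{lem1} can be applied, all a priori unbounded curvature contributions cancel and $|\vec H_N|\le C$. Finally $N$ has bounded volume over $B_{10}$: $\prod_i(1+\lambda_i^2)=(1-f)^2+(\sigma_1-\sigma_3)^2$, so the area element is $\le C(1+\sigma_1+|\sigma_3|)$, and $\int_{B_{10}}\sigma_1\,dx$, $\int_{B_{10}}|\sigma_3|\,dx\le C$ follow from the proof of Lemma \ref{lem:areaEstimate} and Maclaurin's inequality.

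\emph{Step II.} Since the Newton tensor $\sigma_2^{ij}$ is divergence-free, $\sigma_2^{ij}\partial_i\partial_j v=\mathrm{div}_M(\sigma_2^{ij}\partial_j v)$. Dividing the inequality of Lemma \ref{lem3} by $\sigma_1 f-\sigma_3>0$ makes the bad term $-C(\sigma_1 f-\sigma_3)$ into a harmless constant $-C$, the favorable quadratic term into $\simeq|\nabla_N b|^2$ (using the eigenvalue identity of Step I), and the remaining first-order term $g^{ij}b_i d_\nu f(e_j)/(\sigma_1 f-\sigma_3)$ into something $\le C|\nabla_N b|$ (by Cauchy--Schwarz, $\sum_i(\lambda_i^2+f)=\sigma_1^2+f$, and $\sigma_1 f-\sigma_3\ge c\sigma_1$). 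The discrepancy between $(\sigma_1 f-\sigma_3)^{-1}\sigma_2^{ij}\partial_i\partial_j$ and the genuine Laplace--Beltrami operator of $N$ is a first-order operator whose coefficient is, by the bounded mean curvature from Step I, of size $\le C$, hence produces another term $\le C|\nabla_N b|$. Absorbing these into the good quadratic term by Young's inequality gives $\Delta_N b\ge c|\nabla_N b|^2-C'\ge -C'$, so $b$ is a non-negative subsolution of $\Delta_N$ up to a bounded constant.

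\emph{Step III.} As $|\vec H_N|\le C$, the Michael--Simon Sobolev inequality holds on $N$, and running the De Giorgi--Moser iteration for $b$, which satisfies $\Delta_N b\ge -C'$ weakly, over a fixed ball $B$ of $N$ around $y_0$ yields $b(y_0)\le\sup_{B}b\le C\bigl(1+\int_{2B}b\,d\mu_N\bigr)$, the claimed inequality; the right-hand side is dominated by the integral of $b$ over all of $N$ above $B_{10}$, to be estimated in the next section. The main obstacle is Step I --- verifying that $(X,\nu)$ really has bounded mean curvature, the new ingredient, which depends on a precise cancellation of the second-order terms through Lemma \ref{lem1}. A secondary difficulty is the bookkeeping in Step II: tracking the several first-order contributions and checking that each is dominated by $|\nabla_N b|^2$ is exactly why the favorable gradient term in Lemma \ref{lem3} cannot be discarded and why the degenerate metric of $N$, rather than the Euclidean one, must be used throughout.
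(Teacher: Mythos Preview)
Your proposal shares the paper's central observation---that $(X,\nu)$ sits in $\mathbb{R}^4\times\mathbb{R}^4$ as a three-dimensional submanifold with bounded mean curvature---and correctly identifies this as the heart of the matter. The implementation, however, differs. Rather than passing to $\Delta_N$ and running a Sobolev/Moser scheme, the paper reproduces the Michael--Simon monotonicity argument directly in terms of the operator $\sigma_2^{ij}$: with $r^2=f\,|X-X(y_0)|^2+2-2\langle\nu,\nu(y_0)\rangle$ and the radial test function $\psi(r)=\int_r^\infty t\,\chi(\rho-t)\,dt$, one computes $\sigma_2^{ij}\psi_{ij}$ from the structure equations and the key bound $\sigma_2^{ij}r_ir_j\le(f\sigma_1-\sigma_3)(1+Cr)$, multiplies by $b$, integrates, inserts Lemma~\ref{lem3}, and obtains a differential inequality in $\rho$ that yields (\ref{eq:meanvalue}) after Gr\"onwall and $\delta\to0$. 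This buys an explicit, self-contained derivation with the exact weight $(f\sigma_1-\sigma_3)$; your route is more modular but relies on citing the Michael--Simon machinery as a black box.

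Two refinements of your outline are worth noting. First, the paper equips $\mathbb{R}^4\times\mathbb{R}^4$ with the \emph{conformal} metric $f\sum dx_i^2+\sum dy_i^2$ rather than the Euclidean one: then $G^{ij}=\sigma_2^{ij}/(f\sigma_1-\sigma_3)$ exactly, the volume form is exactly $(f\sigma_1-\sigma_3)\,dM$, and---because $\sigma_2^{ij}$ is $g$-divergence-free by Codazzi---one has $\Delta_N v=(f\sigma_1-\sigma_3)^{-1}\sigma_2^{ij}v_{;ij}$ with \emph{no} first-order discrepancy. So your Step~II correction term actually vanishes with the right ambient metric; with the Euclidean choice it stems from the $f$-versus-$1$ comparison (hence from $\nabla f$ via (\ref{eq:gradient})), not from the mean curvature bound as you wrote. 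Second, once $\Delta_N b\ge -C'$ is in hand, Moser iteration is heavier than necessary: the Michael--Simon mean value inequality for non-negative sub\-solutions on submanifolds of bounded mean curvature already delivers $b(y_0)\le C\int b\,d\mu_N$ over an extrinsic ball, which (since $|X(x)-X(y_0)|\ge|x-y_0|$) is contained in $N$ over $B_1(y_0)$---this is precisely what the paper invokes in its sketch for the case $f\equiv1$.
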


\begin{equation}
\sup_{B_{1}}b=b(y_{0})\leq C\int_{B_{1}(y_{0})}b(x)(\sigma_{1}f-\sigma_{3})dx,\label{eq:meanvalue}
\end{equation}
where $C$ depends only on $\lVert f\rVert_{C^{2}}$, $\lVert\frac{1}{f}\rVert_{L^{\infty}}$
and $\lVert u\rVert_{C^{1}}$.
\begin{proof}
Because the graph $X^{\Sigma}:=(X,\nu)=(x_{1},x_{2},x_{3},u,\frac{u_{1}}{W},\frac{u_{2}}{W},\frac{u_{3}}{W},-\frac{1}{W})$
where $u$ satisfies equation (\ref{eq:scalar}) can be viewed as
a three dimensional smooth submanifold in $(\mathbb{R}^{4}\times\mathbb{R}^{4},f(X,\nu)\sum\limits _{i=1}^{i=4}dx_{i}^{2}+\sum\limits _{i=1}^{i=4}dy_{i}^{2})$
.

When $f=1$, we shall see it is a submanifold with bounded mean curvature.
This is the key observation in the paper.

In fact, we have
\begin{eqnarray*}
X_{i}^{\Sigma} & = & (X_{i},\nu_{i})=(X_{i},h_{i}^{k}X_{k}),
\end{eqnarray*}

and
\[
G_{ij}=<X_{i}^{\Sigma},X_{j}^{\Sigma}>_{\mathbb{R}^{4}\times\mathbb{R}^{4}}=g_{ij}+h_{i}^{k}h_{kj}.
\]

If we denote $\sigma_{2}^{ij}:=\frac{\partial\sigma_{2}}{\partial h_{ij}}$,
we can verify that
\[
G^{ij}=\frac{\sigma_{2}^{ij}}{\sigma_{1}-\sigma_{3}}.
\]

Then we can prove that the mean curvature is bounded as follows:
\begin{eqnarray*}
\mathscr{\lvert H}\rvert & \leq & \lvert G{}^{ij}X_{ij}^{\Sigma}\rvert\\
 & = & \lvert\frac{\sigma_{2}^{ij}}{\sigma_{1}-\sigma_{3}}(-h_{ij}\nu,h_{ij}^{k}X_{k}-h_{i}^{k}h_{kj}\nu)\rvert\\
 & \leq & \lvert\frac{(-2\sigma_{2}\nu,-(\sigma_{1}-3\sigma_{3})\nu)}{\sigma_{1}-\sigma_{3}}\rvert\leq C.
\end{eqnarray*}

By Michael-Simon's mean value inequalities we get the estimate (\ref{eq:mean})
for scalar curvature equations.

When $f=f(X,\nu)$, we write down the details of this proof in the
appendix.
\end{proof}

\section{Proof of the theorem \ref{thm:scalar} }
\begin{proof}
From Theorem \ref{thm-meanvalue}, we have at the maximum point $x_{0}$
of $\bar{B}_{1}(0)$
\begin{eqnarray}
b(x_{0}) & \leq & \int_{B_{1}(x_{0})}b(\sigma_{1}f-\sigma_{3})dx.\label{eq:WY1}
\end{eqnarray}

We shall estimate the first part $\int_{B_{1}(x_{0})}b\sigma_{1}fdx$
in the above integral at first. Recalling that
\begin{equation}
\sigma_{2}^{ij}b{}_{ij}\geq\frac{1}{100}\sigma_{2}^{ij}b{}_{i}b{}_{j}-C(\sigma_{1}f-\sigma_{3})+g^{ij}d_{\nu}f(e_{i})b_{j},\label{eq:logb-1}
\end{equation}
we have an integral version of this inequality for any $r<5$,
\begin{eqnarray}
\int_{B_{r+1}}-\sigma_{2}^{ij}\phi_{i}b_{j}dM & \geq & c_{0}\int_{B_{r+1}}\phi\sigma_{2}^{ij}b_{i}b_{j}dM\label{eq:intlog}\\
 &  & -C[\int_{B_{r+1}}(\sigma_{1}f-\sigma_{3})\phi dx\nonumber \\
 &  & +\int_{B_{r+1}}g^{ij}d_{\nu}f(e_{i})b_{j}\phi dM].\nonumber
\end{eqnarray}
for all non-negative $\phi\in C_{0}^{\infty}(B_{r+1})$. We choose
different cutoff functions. They are all denoted by $0\le\phi\leq1$,
which support in larger ball $B_{r+1}(x_{0})$ and equals to $1$
in smaller ball $B_{r}(x_{0})$ with $|D\phi|+|D^{2}\phi|\leq C$.
\begin{eqnarray}
\int_{B_{1}(x_{0})}b\sigma_{1}dx & \leq & \int_{B_{2}(x_{0})}\phi b\sigma_{1}dx\nonumber \\
 & \leq & C(\int_{B_{2}(x_{0})}bdx+\int_{B_{2}(x_{0})}|Db|dx)\nonumber \\
 & \leq & C(1+\int_{B_{2}(x_{0})}|Db|dx).\label{eq:WY2}
\end{eqnarray}
 We only need to estimate $\int_{B_{2}(x_{0})}|Db|dx$. We use
\[
\sigma_{1}\sigma_{2}^{ij}\geq c\delta_{ij},
\]
to get
\[
\int_{B_{2}(x_{0})}|Db|dx\leq C\int_{B_{2}(x_{0})}\sqrt{\sigma_{2}^{ij}b_{i}b_{j}}\sqrt{\sigma_{1}}dx.
\]
By Holder inequality,
\begin{eqnarray}
\int_{B_{2}(x_{0})}|Db|dx & \leq & (\int_{B_{2}(x_{0})}\sigma_{2}^{ij}b_{i}b_{j}dx)^{\frac{1}{2}}(\int_{B_{2}(x_{0})}\sigma_{1}dx)^{\frac{1}{2}}\nonumber \\
 & \leq & C\int_{B_{3}(x_{0})}\phi^{2}\sigma_{2}^{ij}b_{i}b_{j}dx.\label{eq:WY3}
\end{eqnarray}
 Then using (\ref{eq:intlog}) and Lemma \ref{lem:areaEstimate},
we get
\begin{eqnarray*}
\int_{B_{3}(x_{0})}\phi^{2}\sigma_{2}^{ij}b_{i}b_{j}dx & \leq & C\int_{B_{3}(x_{0})}\phi^{2}\sigma_{2}^{ij}b_{i}b_{j}dM\\
 & \leq & C[-\int_{B_{3}(x_{0})}\phi\sigma_{2}^{ij}\phi_{i}b_{j}dM+\int_{B_{3}(x_{0})}\phi^{2}(\sigma_{1}f-\sigma_{3})dx\\
 &  & +\int_{B_{3}(x_{0})}\phi^{2}|Db|dM]\\
 & \leq & C(\int_{B_{3}(x_{0})}\sqrt{\phi^{2}\sigma_{2}^{ij}b_{i}b_{j}}\sqrt{\sigma_{2}^{kl}\phi_{k}\phi_{l}}dx+1+\int_{B_{3}(x_{0})}\phi^{2}\sqrt{\sigma_{2}^{ij}b_{i}b_{j}}\sqrt{\sigma_{1}}dx).
\end{eqnarray*}
 By Cauchy-Schwarz inequality
\begin{eqnarray*}
\int_{B_{3}(x_{0})}\phi^{2}\sigma_{2}^{ij}b_{i}b_{j}dx & \leq & C(\epsilon\int_{B_{3}(x_{0})}\phi^{2}\sigma_{2}^{ij}b_{i}b_{j}dx+\int_{B_{3}(x_{0})}\sigma_{2}^{ij}\phi_{i}\phi_{i}dx+\frac{1}{\epsilon})\\
 & \leq & C\epsilon\int_{B_{3}(x_{0})}\phi^{2}\sigma_{2}^{ij}b_{i}b_{j}dx+\frac{C}{\epsilon}.
\end{eqnarray*}
We choose $\epsilon$ small such that $C\epsilon\leq\frac{1}{2}$,
\begin{eqnarray}
\int_{B_{3}(x_{0})}\phi^{2}\sigma_{2}^{ij}b_{i}b_{j}dx & \leq & C.\label{eq:WY4}
\end{eqnarray}
So far we have obtained the estimate for the first part of (\ref{eq:WY1}),
by combining (\ref{eq:WY2}), (\ref{eq:WY3}), and (\ref{eq:WY4}).
We have
\begin{equation}
\int_{B_{1}(x_{0})}bf\sigma_{1}dx\leq C.\label{eq:part1}
\end{equation}

The second part is to estimate $\int_{B_{1}(x_{0})}-b\sigma_{3}dx$.
Thanks to the divergence free property, we integral by parts as follows
\begin{eqnarray}
-3\int_{B_{2}(x_{0})}\phi^{2}b\sigma_{3}dx & = & -\int\phi^{2}b[T_{2}]_{i}^{j}D_{j}(\frac{u_{i}}{W})dx\nonumber \\
 & = & \underbrace{\int[T_{2}]_{i}^{j}(\phi^{2})_{j}b\frac{u_{i}}{W}}_{I}dx+\underbrace{\int[T_{2}]_{i}^{j}\phi^{2}b_{j}\frac{u_{i}}{W}}_{II}dx.\label{eq:sigma3}
\end{eqnarray}

We estimate $I$ by applying (\ref{eq:Tk1}),
\begin{eqnarray}
I & = & \int(\sigma_{2}\delta_{i}^{j}-[T_{1}]_{i}^{k}h_{k}^{j})(\phi^{2})_{j}b\frac{u_{i}}{W}dx\nonumber \\
 & \leq & \int bdx-\int[T_{1}]_{i}^{k}D_{k}(\frac{u_{j}}{W})(\phi^{2})_{j}b\frac{u_{i}}{W}dx\nonumber \\
 & \leq & C+\int[T_{1}]_{i}^{k}\frac{u_{j}}{W}(\phi^{2})_{jk}b\frac{u_{i}}{W}dx+\int[T_{1}]_{i}^{k}\frac{u_{j}}{W}(\phi^{2})_{j}b_{k}\frac{u_{i}}{W}dx\nonumber \\
 &  & +2\int\sigma_{2}\frac{u_{j}}{W}(\phi^{2})_{j}bdx\nonumber \\
 & \leq & C+\int\sigma_{1}bdx+\int[T_{1}]^{kl}b_{k}g_{li}\frac{u_{i}}{W}\frac{u_{j}}{W}(\phi^{2})_{j}dx.\label{eq:I}
\end{eqnarray}

The seconde term of (\ref{eq:I}) can be estimated by the same argument
as before. We only need to estimate the last term of (\ref{eq:I}).
By Cauchy-Schwarz inequality and (\ref{eq:WY4}), we have
\begin{eqnarray}
\int[T_{1}]^{kl}b_{k}g_{li}\frac{u_{i}}{W}\sum_{j}(\frac{u_{j}}{W})(\phi^{2})_{j}dx & \leq & 4\int\phi^{2}[T_{1}]^{ij}b_{i}b_{j}dx+4\int[T_{1}]^{ij}g_{ik}\frac{u_{k}}{W}g_{jl}\frac{u_{l}}{W}(\sum_{p}\frac{u_{p}\phi{}_{p}}{W})^{2}dx\nonumber \\
 & \leq & C.\label{eq:I2}
\end{eqnarray}

From (\ref{eq:I}) and (\ref{eq:I2}) we obtain
\begin{equation}
I=\int[T_{2}]_{i}^{j}(\phi^{2})_{j}b\frac{u_{i}}{W}dx\leq C.\label{eq:Ifinal}
\end{equation}

Now we deal with $II$ by using (\ref{eq:Tk2})
\begin{eqnarray}
II & \leq & \int(\sigma_{2}\delta_{i}^{j}-[T_{1}]_{k}^{j}h_{i}^{k})\phi^{2}b_{j}\frac{u_{i}}{W}dx\nonumber \\
 & \leq & C\int\lvert Db\rvert dx-\int[T_{1}]^{jk}h_{ki}\phi^{2}b_{j}\frac{u_{i}}{W}dx.\label{eq:II}
\end{eqnarray}

As before, the first term of (\ref{eq:II}) is already estimated by
(\ref{eq:WY3}) and (\ref{eq:WY4}). We compute the second term of
(\ref{eq:II})
\begin{eqnarray*}
-\int[T_{1}]^{jk}h_{ki}\phi^{2}b_{j}\frac{u_{i}}{W}dx & \leq & 2\int\phi^{2}[T_{1}]^{ji}b_{j}b_{i}dx+2\int[T_{1}]^{ij}h_{ik}\frac{u_{k}}{W}h_{jl}\frac{u_{l}}{W}\phi^{2}dx\\
 & \leq & 2\int\phi^{2}[T_{1}]^{ji}b_{j}b_{i}dx+2\int\sigma_{2}\frac{u_{i}u_{j}}{W^{2}}h_{ij}\phi^{2}dx-2\int\sigma_{3}\frac{\lvert Du\rvert^{2}}{W^{2}}\phi^{2}dx.
\end{eqnarray*}

By (\ref{eq:WY4}) and Lemma \ref{lem:areaEstimate}, we get the estimate
for $II$,
\begin{equation}
II=\int[T_{2}]_{i}^{j}\phi^{2}b_{j}\frac{u_{i}}{W}dx\leq C.\label{eq:IIfinal}
\end{equation}

With the estimate (\ref{eq:Ifinal}) and (\ref{eq:IIfinal}) for $I$
and $II$ , we get
\begin{equation}
\int_{B_{1}(x_{0})}-b\sigma_{3}dx\leq C.\label{eq:part2}
\end{equation}

Finally, combining (\ref{eq:part1}) and (\ref{eq:part2}), we get
the estimate
\begin{equation}
\log\sigma_{1}(x_{0})\leq C.
\end{equation}
\end{proof}

\section{Appendix}

\subsection{Proof of the Lemma \ref{lem3}.}
\begin{proof}
We may choose an orthonormal frame and assume that $\{h_{ij}\}$ is
diagonal at the point. The differential equation of $b$ by using
Lemma \ref{lem1} is
\begin{eqnarray*}
A:=\sigma_{2}^{ij}b_{ij}-\epsilon\sigma_{2}^{ij}b_{i}b_{j} & \geq & \frac{\sum\limits _{i}(\sum\limits _{k\neq p}h_{kpi}^{2}-\sum\limits _{k\neq p}h_{kki}h_{ppi})}{\sigma_{1}}\\
 &  & -\frac{(1+\epsilon)\sigma_{2}^{ii}(\sum\limits _{k}h_{kki})^{2}}{\sigma_{1}^{2}}\\
 &  & -C(f\sigma_{1}-\sigma_{3})+g^{ij}b_{i}d_{\nu}f(e_{j}).
\end{eqnarray*}
We use (\ref{eq:f1}) to substitute terms with $h_{iii}$ in $A$,
\begin{eqnarray*}
A & \geq & \frac{6h_{123}^{2}}{\sigma_{1}}+\frac{2\sum_{k\neq p}h_{kpp}^{2}}{\sigma_{1}}+\sum_{k\neq p}\frac{2h_{kkp}}{\sigma_{1}}(\frac{\sum\limits _{i\neq p}\sigma_{2}^{ii}h_{iip}-f_{p}}{\sigma_{2}^{pp}})\\
 &  & -\frac{2h_{113}h_{223}+2h_{112}h_{332}+2h_{221}h_{331}}{\sigma_{1}}\\
 &  & -C(f\sigma_{1}-\sigma_{3})+g^{ij}b_{i}d_{\nu}f(e_{j})\\
 &  & -\frac{(1+\epsilon)\sigma_{2}^{ii}(\sum_{k\neq i}h_{kki}-\frac{\sum\limits _{k\neq i}\sigma_{2}^{kk}h_{kki}}{\sigma_{2}^{ii}}+\frac{f_{i}}{\sigma_{2}^{ii}})^{2}}{\sigma_{1}^{2}}.
\end{eqnarray*}

Due to symmetry, we only need to give the lower bound of the terms
which contain $h_{221}$ and $h_{331}$. We denote these terms by
$A_{1}$.
\begin{eqnarray*}
A_{1} & := & \frac{2(\sigma_{2}^{11}+\sigma_{2}^{22})h_{221}^{2}}{\sigma_{1}\sigma_{2}^{11}}+\frac{2(\sigma_{2}^{11}+\sigma_{2}^{33})h_{331}^{2}}{\sigma_{1}\sigma_{2}^{11}}-\frac{2(h_{221}+h_{331})f_{1}}{\sigma_{1}\sigma_{2}^{11}}\\
 &  & +\frac{2(\sigma_{2}^{22}+\sigma_{2}^{33}-\sigma_{2}^{11})h_{221}h_{331}}{\sigma_{1}\sigma_{2}^{11}}\\
 &  & -\frac{(1+\epsilon)[(\lambda_{2}-\lambda_{1})h_{221}+(\lambda_{3}-\lambda_{1})h_{331}+f_{1}]^{2}}{\sigma_{1}^{2}\sigma_{2}^{11}}.
\end{eqnarray*}
Then we use Cauchy-Schwarz inequality and Lemma \ref{lemma2},
\begin{eqnarray}
-\frac{(1+\epsilon)[(\lambda_{2}-\lambda_{1})h_{221}+(\lambda_{3}-\lambda_{1})h_{331}+f_{1}]^{2}}{\sigma_{1}^{2}\sigma_{2}^{11}} & \geq\nonumber \\
-\frac{(1+2\epsilon)[(\lambda_{2}-\lambda_{1})h_{221}+(\lambda_{3}-\lambda_{1})h_{331}]^{2}}{\sigma_{1}^{2}\sigma_{2}^{11}}-\frac{C}{\epsilon}\sigma_{1}.\label{eq:cauchy1}
\end{eqnarray}
Similarly, we have
\begin{eqnarray}
-\frac{2(h_{221}+h_{331})f_{1}}{\sigma_{1}\sigma_{2}^{11}} & \geq & -\frac{2\epsilon^{2}\sigma_{1}(h_{221}+h_{331})^{2}}{\sigma_{1}\sigma_{2}^{11}}-\frac{f_{1}^{2}}{2\epsilon^{2}\sigma_{2}^{11}\sigma_{1}^{2}}\nonumber \\
 & \geq & -\frac{2\epsilon^{2}\sigma_{1}(h_{221}+h_{331})^{2}}{\sigma_{1}\sigma_{2}^{11}}-\frac{C}{\epsilon^{2}}\sigma_{1}.\label{eq:cauchy2}
\end{eqnarray}
 Then we substitute (\ref{eq:cauchy1}) and (\ref{eq:cauchy2}) into
$A_{1}$ to get
\begin{eqnarray*}
A_{1} & \geq & \frac{2\sigma_{2}^{11}+2\sigma_{2}^{22}}{\sigma_{1}\sigma_{2}^{11}}h_{221}^{2}+\frac{2\sigma_{2}^{11}+2\sigma_{2}^{33}}{\sigma_{1}\sigma_{2}^{11}}h_{331}^{2}\\
 &  & +\frac{4\lambda_{1}}{\sigma_{1}\sigma_{2}^{11}}h_{221}h_{331}-\frac{2\epsilon^{2}\sigma_{1}(h_{221}+h_{331})^{2}}{\sigma_{1}\sigma_{2}^{11}}\\
 &  & -\frac{(1+2\epsilon)[(\lambda_{2}-\lambda_{1})h_{221}+(\lambda_{3}-\lambda_{1})h_{331}]^{2}}{\sigma_{1}^{2}\sigma_{2}^{11}}\\
 &  & -\frac{C}{\epsilon^{2}}\sigma_{1}.
\end{eqnarray*}
 We will show the Claim \ref{claim1} and Claim \ref{claim2} in the
below. Then we choose $\epsilon=\frac{1}{100}$, such that
\[
1+\delta\geq\frac{1+2\epsilon}{1-\epsilon},
\]
where $\delta$ is small constant in the Claim \ref{claim2}.

In all, we shall get
\begin{eqnarray*}
A & = & \sum_{i=1}^{3}A_{i}-C(f\sigma_{1}-\sigma_{3})+g^{ij}b_{i}d_{\nu}f(e_{j})\\
 & \geq & -C(f\sigma_{1}-\sigma_{3})+g^{ij}d_{\nu}f(e_{i})b_{j}.
\end{eqnarray*}
\end{proof}
\begin{claim}
\label{claim1} For any $\epsilon\leq\frac{2}{3}$, we have
\[
\frac{2\sigma_{2}^{11}+2\sigma_{2}^{22}}{\sigma_{1}\sigma_{2}^{11}}h_{221}^{2}+\frac{2\sigma_{2}^{11}+2\sigma_{2}^{33}}{\sigma_{1}\sigma_{2}^{11}}h_{331}^{2}+\frac{4\lambda_{1}}{\sigma_{1}\sigma_{2}^{11}}h_{221}h_{331}\geq\frac{2\epsilon\sigma_{1}(h_{221}+h_{331})^{2}}{\sigma_{1}\sigma_{2}^{11}}.
\]
\end{claim}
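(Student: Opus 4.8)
The plan is to treat the left-hand side as a quadratic form in the two variables $(h_{221}, h_{331})$ and to show that, after subtracting the right-hand side, the resulting symmetric $2\times 2$ matrix is positive semidefinite. Write $a = h_{221}$, $b = h_{331}$ for brevity (local to this proof), and set $s_0 = \sigma_1$, $s_i = \sigma_2^{ii}$. The inequality to prove is
\[
\frac{2(s_1+s_2)}{s_0 s_1}a^2 + \frac{2(s_1+s_3)}{s_0 s_1}b^2 + \frac{4\lambda_1}{s_0 s_1}ab - \frac{2\epsilon s_0}{s_0 s_1}(a+b)^2 \geq 0.
\]
Multiplying through by the positive quantity $\tfrac{s_0 s_1}{2}$, this is equivalent to positivity of the quadratic form with matrix
\[
Q = \begin{pmatrix} s_1 + s_2 - \epsilon s_0 & \lambda_1 - \epsilon s_0 \\ \lambda_1 - \epsilon s_0 & s_1 + s_3 - \epsilon s_0 \end{pmatrix}.
\]
So the task reduces to checking $\operatorname{tr} Q \ge 0$ and $\det Q \ge 0$.

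First I would record the explicit values in dimension three with $h$ diagonal: $\sigma_1 = \lambda_1 + \lambda_2 + \lambda_3$, $\sigma_2^{11} = \lambda_2 + \lambda_3$, $\sigma_2^{22} = \lambda_1 + \lambda_3$, $\sigma_2^{33} = \lambda_1 + \lambda_2$. Hence $s_1 + s_2 = \lambda_1 + \lambda_2 + 2\lambda_3 = \sigma_1 + \lambda_3$, and likewise $s_1 + s_3 = \sigma_1 + \lambda_2$. The diagonal entries of $Q$ become $(1-\epsilon)\sigma_1 + \lambda_3$ and $(1-\epsilon)\sigma_1 + \lambda_2$; summing, $\operatorname{tr} Q = 2(1-\epsilon)\sigma_1 + \lambda_2 + \lambda_3 = (2 - 2\epsilon)\sigma_1 + \sigma_1 - \lambda_1 = (3 - 2\epsilon)\sigma_1 - \lambda_1$. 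Since $\lambda_1 \le \sigma_1$ (as $\sigma_1 > 0$ and $\lambda_2, \lambda_3$ need not both be nonnegative, but in $\Gamma_2$ one has $\lambda_1 \ge \lambda_2 \ge \lambda_3$ with $\sigma_1, \sigma_2 > 0$, which forces $\lambda_1 > 0$ and $\lambda_1 \le \sigma_1$ is immediate from $\lambda_2 + \lambda_3 = \sigma_2^{11} > 0$), we get $\operatorname{tr} Q \ge (2 - 2\epsilon)\sigma_1 \ge 0$ for $\epsilon \le 1$.

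The substantive step is the determinant. Writing $\alpha = (1-\epsilon)\sigma_1$, we need
\[
(\alpha + \lambda_3)(\alpha + \lambda_2) - (\lambda_1 - \epsilon\sigma_1)^2 \geq 0,
\]
i.e. $\alpha^2 + \alpha(\lambda_2 + \lambda_3) + \lambda_2\lambda_3 - \lambda_1^2 + 2\epsilon\sigma_1\lambda_1 - \epsilon^2\sigma_1^2 \ge 0$. Using $\lambda_2 + \lambda_3 = \sigma_1 - \lambda_1$ and $\lambda_2\lambda_3 = \sigma_2 - \lambda_1(\lambda_2+\lambda_3) = \sigma_2 - \lambda_1\sigma_1 + \lambda_1^2$, the $\lambda_1^2$ terms cancel and the left side collapses to $\alpha^2 + \alpha(\sigma_1 - \lambda_1) + \sigma_2 - \lambda_1\sigma_1 + 2\epsilon\sigma_1\lambda_1 - \epsilon^2\sigma_1^2$. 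Substituting $\alpha = (1-\epsilon)\sigma_1$ and expanding, the $\sigma_1^2$ coefficient is $(1-\epsilon)^2 + (1-\epsilon) - \epsilon^2 = 2 - 3\epsilon$, the $\sigma_1\lambda_1$ coefficient is $-(1-\epsilon) - 1 + 2\epsilon = 3\epsilon - 2$, and there is a leftover $+\sigma_2 > 0$. Thus $\det Q \ge (2-3\epsilon)\sigma_1(\sigma_1 - \lambda_1) + \sigma_2 = (2-3\epsilon)\sigma_1 \sigma_2^{11} + \sigma_2$, which is manifestly nonnegative once $\epsilon \le \tfrac23$, since $\sigma_1 > 0$, $\sigma_2^{11} > 0$, and $\sigma_2 > 0$ on $\Gamma_2$. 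I expect the only mild obstacle to be bookkeeping the cancellation of the $\lambda_1^2$ terms and keeping track of which elementary-symmetric identities to invoke; everything else is forced. Both $\operatorname{tr} Q \ge 0$ and $\det Q \ge 0$ then give $Q \succeq 0$, which is the claim.
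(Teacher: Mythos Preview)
Your proposal is correct and follows essentially the same route as the paper: both reduce the inequality to positive semidefiniteness of the $2\times 2$ matrix $Q$ and then verify $\det Q\ge 0$ by an elementary symmetric-function computation. Your final expression $(2-3\epsilon)\sigma_1\sigma_2^{11}+\sigma_2$ is algebraically identical to the paper's $3(1-\epsilon)f+(2-3\epsilon)(\lambda_2^2+\lambda_2\lambda_3+\lambda_3^2)$ (use $\sigma_1\sigma_2^{11}=\sigma_2+\lambda_2^2+\lambda_2\lambda_3+\lambda_3^2$), and you additionally check $\operatorname{tr}Q\ge 0$, which the paper leaves implicit.
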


\begin{proof}
This claim follows from the following elementary inequality
\begin{eqnarray*}
 & (\sigma_{2}^{11}+\sigma_{2}^{22}-\epsilon\sigma_{1})(\sigma_{2}^{11}+\sigma_{2}^{33}-\epsilon\sigma_{1})-(\lambda_{1}-\epsilon\sigma_{1})^{2}\\
= & (1-\epsilon)^{2}\sigma_{1}^{2}+(1-\epsilon)\sigma_{1}(\lambda_{2}+\lambda_{3})+\lambda_{2}\lambda_{3}-(\lambda_{1}-\epsilon\sigma_{1})^{2}\\
= & 3(1-\epsilon)f+(2-3\epsilon)(\lambda_{2}^{2}+\lambda_{2}\lambda_{3}+\lambda_{3}^{2}).
\end{eqnarray*}

If we assume $2-3\epsilon\geq0$, we have above inequality nonnegative.
\end{proof}
\begin{claim}
\label{claim2} For any $\delta\leq\frac{1}{20}$ , we have
\begin{eqnarray*}
\frac{2\sigma_{2}^{11}+2\sigma_{2}^{22}}{\sigma_{1}\sigma_{2}^{11}}h_{221}^{2}+\frac{2\sigma_{2}^{11}+2\sigma_{2}^{33}}{\sigma_{1}\sigma_{2}^{11}}h_{331}^{2}+\frac{4\lambda_{1}}{\sigma_{1}\sigma_{2}^{11}}h_{221}h_{331}\\
\geq\frac{(1+\delta)[(\lambda_{2}-\lambda_{1})h_{221}+(\lambda_{3}-\lambda_{1})h_{331}]^{2}}{\sigma_{1}^{2}\sigma_{2}^{11}}.
\end{eqnarray*}
\end{claim}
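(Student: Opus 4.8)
The plan is to reduce this to a single-variable (or two-variable homogeneous) quadratic form inequality, exactly as in Claim \ref{claim1}, and then verify the resulting discriminant-type condition using Lemma \ref{lemma2} and the ordering $\lambda_1\geq\lambda_2\geq\lambda_3$. Write $a=h_{221}$, $b=h_{331}$, and set $\mu_2=\lambda_2-\lambda_1\leq 0$, $\mu_3=\lambda_3-\lambda_1\leq 0$. After clearing the common positive factor $\tfrac{2}{\sigma_1\sigma_2^{11}}$ on the left and $\tfrac{1}{\sigma_1^2\sigma_2^{11}}$ on the right, the claimed inequality becomes the statement that the quadratic form
\[
Q(a,b):=2\sigma_1(\sigma_2^{11}+\sigma_2^{22})a^2+2\sigma_1(\sigma_2^{11}+\sigma_2^{33})b^2+4\sigma_1\lambda_1 ab-(1+\delta)(\mu_2 a+\mu_3 b)^2
\]
is nonnegative for all $a,b$. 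Since $\sigma_2^{11}+\sigma_2^{22}=\lambda_1+\lambda_3+\lambda_1+\lambda_2=\sigma_1+\lambda_1-\lambda_2=\sigma_1-\mu_2$ (using $\sigma_2^{ii}=\sigma_1-\lambda_i$), and similarly $\sigma_2^{11}+\sigma_2^{33}=\sigma_1-\mu_3$, while $\lambda_1=\sigma_1-\lambda_2-\lambda_3=\sigma_1+\mu_2+\mu_3-\sigma_1=\dots$ — more carefully $\lambda_1=\tfrac{1}{3}(\sigma_1+(\lambda_1-\lambda_2)+(\lambda_1-\lambda_3))$ is not quite clean, so I would instead just keep $\lambda_1$, $\mu_2$, $\mu_3$ as the variables and record $\mu_2+\mu_3 = 2\lambda_1-\lambda_2-\lambda_3 = 3\lambda_1-\sigma_1$. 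The point is that $Q$ is a positive-semidefinite-looking form with a negative rank-one perturbation, so $Q\geq 0$ iff its $2\times2$ matrix has nonnegative determinant and nonnegative trace.

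The key step is therefore to compute the determinant of the symmetric matrix of $Q/\sigma_1$, namely
\[
\det\!\begin{pmatrix} 2(\sigma_2^{11}+\sigma_2^{22})-\tfrac{(1+\delta)}{\sigma_1}\mu_2^2 & 2\lambda_1-\tfrac{(1+\delta)}{\sigma_1}\mu_2\mu_3\\[4pt] 2\lambda_1-\tfrac{(1+\delta)}{\sigma_1}\mu_2\mu_3 & 2(\sigma_2^{11}+\sigma_2^{33})-\tfrac{(1+\delta)}{\sigma_1}\mu_3^2\end{pmatrix}\geq 0,
\]
and show it is nonnegative provided $\delta\leq\tfrac1{20}$. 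Expanding, the cross terms in $(1+\delta)/\sigma_1$ produce $-\tfrac{2(1+\delta)}{\sigma_1}\big[(\sigma_2^{11}+\sigma_2^{22})\mu_3^2+(\sigma_2^{11}+\sigma_2^{33})\mu_2^2-2\lambda_1\mu_2\mu_3\big]$ plus the $\delta$-free part, which by the computation in Claim \ref{claim1} equals $4\big[3f+2(\lambda_2^2+\lambda_2\lambda_3+\lambda_3^2)\big]$ when $\delta=0$ (that is the $\epsilon=0$ case of Claim \ref{claim1} after accounting for the factor discrepancy). The whole determinant is then a polynomial in $\lambda_1,\lambda_2,\lambda_3$ that is homogeneous of degree $4$; I would bound the negative $O(\delta)$-contribution by a small multiple of the positive $\delta$-free contribution using $|\mu_i|\leq 2\sigma_1$-type bounds together with $\lambda\in\Gamma_2$ (which forces $\sigma_1>0$ and $f=\sigma_2>0$), so that for $\delta$ small enough the determinant stays nonnegative. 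The trace condition is easier: $2(\sigma_2^{11}+\sigma_2^{22})+2(\sigma_2^{11}+\sigma_2^{33})-\tfrac{1+\delta}{\sigma_1}(\mu_2^2+\mu_3^2)$; since $\sigma_2^{11}+\sigma_2^{jj}=\sigma_1-\mu_j$ and $\mu_j\leq 0$, each summand $2(\sigma_1-\mu_j)-\tfrac{1+\delta}{\sigma_1}\mu_j^2$ can be checked to be nonnegative because $-\mu_j\leq\lambda_1\leq\sigma_1$ gives $\mu_j^2\leq\sigma_1^2$ up to a controlled constant, making the trace manifestly positive for $\delta\le\tfrac1{20}$.

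The main obstacle I expect is the determinant computation: it is the genuinely three-dimensional input, and getting the explicit factorization clean enough to see the sign requires exploiting the identity from Claim \ref{claim1} (the miraculous $3(1-\epsilon)f+(2-3\epsilon)(\lambda_2^2+\lambda_2\lambda_3+\lambda_3^2)$ decomposition) rather than brute-forcing. Concretely, I would write the determinant as (the $\delta=0$ determinant) $-\,\delta\cdot(\text{explicit degree-4 correction})$, recognize the first term via Claim \ref{claim1} as $8\sigma_1\big[3f+2(\lambda_2^2+\lambda_2\lambda_3+\lambda_3^2)\big]$-ish (up to the precise normalization — this should be double-checked against the $\epsilon\to 0$ limit there), and then absorb the correction. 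If the naive absorption is too lossy, the fallback is to note that the correction term itself is controlled by $(\lambda_2^2+\lambda_2\lambda_3+\lambda_3^2)$ and $f$ separately — e.g. $\mu_2^2\mu_3^2\leq C(\lambda_2^2+\lambda_2\lambda_3+\lambda_3^2)\sigma_1^2$ on $\Gamma_2$ — so that the threshold $\delta\leq\tfrac1{20}$ comes out of an explicit but routine inequality. Everything else (reduction to a quadratic form, the trace check, closing the argument) is bookkeeping.
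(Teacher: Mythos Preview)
Your reduction to checking that a $2\times 2$ symmetric matrix in $(h_{221},h_{331})$ is positive semidefinite is exactly the right framework, and matches the paper. The gap is in how you propose to verify the determinant condition.

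Your perturbative plan is to write the determinant as its $\delta=0$ value minus a $\delta$-correction and absorb the latter using bounds of the type
\[
\mu_2^2\mu_3^2\;\leq\; C(\lambda_2^2+\lambda_2\lambda_3+\lambda_3^2)\,\sigma_1^2\qquad\text{on }\Gamma_2.
\]
This inequality is false. Take $\lambda_1=1$, $\lambda_2=\lambda_3=\epsilon$ with $\epsilon>0$ small: then $\sigma_2=2\epsilon+\epsilon^2>0$, but $\mu_2^2\mu_3^2\approx 1$ while $(\lambda_2^2+\lambda_2\lambda_3+\lambda_3^2)\sigma_1^2\approx 3\epsilon^2\to 0$. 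The same configuration kills the alternative absorption by $f\sigma_1^2$, since $f=\sigma_2\approx 2\epsilon$. So neither of your proposed sinks for the $\delta$-correction actually controls it, and the ``routine inequality'' you defer to does not exist in the form you state. (There is also a minor slip earlier: $\sigma_2^{11}+\sigma_2^{22}=\sigma_1+\lambda_3$, not $\sigma_1-\mu_2$; and in the trace check, $-\mu_3=\lambda_1-\lambda_3$ can exceed $\sigma_1$ when $\lambda_3<0$, so your bound $\mu_j^2\leq\sigma_1^2$ fails for $j=3$.)

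What the paper does instead is an exact algebraic decomposition, not a perturbative one. It rewrites each coefficient by completing the square in $\lambda_1$: for instance
\[
2\sigma_1(\sigma_2^{11}+\sigma_2^{22})-(1+\delta)(\lambda_1-\lambda_2)^2
=(1-\delta)\Bigl(\lambda_1+\tfrac{\delta}{1-\delta}\lambda_2\Bigr)^{\!2}+\tfrac{1-2\delta}{1-\delta}\lambda_2^2+4\lambda_3^2+6f,
\]
with an analogous expression for the $h_{331}^2$ coefficient and a matching factorization of the cross term. This splits the $2\times 2$ matrix as a sum of three pieces: a rank-one piece (manifestly PSD), a $2\times 2$ matrix in $(\lambda_2,\lambda_3)$ whose determinant is checked directly, and the matrix $\begin{pmatrix}6f&3f\\3f&6f\end{pmatrix}$. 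The point is that the dangerous $\lambda_1^2$ contributions cancel \emph{exactly} inside the rank-one piece, so nothing needs to be absorbed; the residual terms involve only $\lambda_2,\lambda_3,f$ and are easy. Your approach leaves the $\lambda_1^2$-sized terms in the correction and tries to absorb them into a $\delta=0$ determinant that, as the example above shows, can be of order $\epsilon$ --- the absorption only works because the correction is \emph{also} of order $\epsilon$ in that regime, a cancellation your crude bounds do not see.
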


\begin{proof}
We can compute the coefficient in front of $\frac{h_{221}^{2}}{\sigma_{1}^{2}\sigma_{2}^{11}}$
\begin{eqnarray*}
2(\sigma_{2}^{11}+\sigma_{2}^{22})\sigma_{1}-(1+\delta)(\lambda_{1}-\lambda_{2})^{2} & = & (1-\delta)\lambda_{1}^{2}+(1-\delta)\lambda_{2}^{2}+4\lambda_{3}^{2}+6f+2\delta\lambda_{1}\lambda_{2}\\
 & = & (1-\delta)(\lambda_{1}+\frac{\delta}{1-\delta}\lambda_{2})^{2}+\frac{1-2\delta}{1-\delta}\lambda_{2}^{2}+4\lambda_{3}^{2}+6f.
\end{eqnarray*}

And similarly, the coefficient in front of $\frac{h_{331}^{2}}{\sigma_{1}^{2}\sigma_{2}^{11}}$
is
\[
(1-\delta)(\lambda_{1}+\frac{\delta}{1-\delta}\lambda_{3})^{2}+\frac{1-2\delta}{1-\delta}\lambda_{3}^{2}+4\lambda_{2}^{2}+6f.
\]

We also compute the coefficient of $\frac{2h_{221}h_{331}}{\sigma_{1}^{2}\sigma_{2}^{11}}$
\begin{eqnarray*}
2\lambda_{1}\sigma_{1}-(1+\delta)(\lambda_{1}-\lambda_{2})(\lambda_{1}-\lambda_{3}) & = & (1-\delta)\lambda_{1}^{2}+(3+\delta)f-2(2+\delta)\lambda_{2}\lambda_{3}\\
 & = & (1-\delta)(\lambda_{1}+\frac{\delta}{1-\delta}\lambda_{2})(\lambda_{1}+\frac{\delta}{1-\delta}\lambda_{3})\\
 &  & -\frac{4-3\delta}{1-\delta}\lambda_{2}\lambda_{3}+3f.
\end{eqnarray*}

It is easy to see that for any small $\delta$
\begin{eqnarray*}
[\frac{1-2\delta}{1-\delta}\lambda_{2}^{2}+4\lambda_{3}^{2}][\frac{1-2\delta}{1-\delta}\lambda_{3}^{2}+4\lambda_{2}^{2}] & \geq & [-\frac{4-3\delta}{1-\delta}\lambda_{2}\lambda_{3}]^{2}
\end{eqnarray*}

and
\begin{eqnarray*}
(6f)^{2} & \geq & (3f)^{2}.
\end{eqnarray*}

We have proved that the coefficient matrix in front of $h_{221}^{2}$,
$h_{331}^{2}$ and $2h_{221}h_{331}$ is positive definite. So we
complete the proof of this claim.
\end{proof}

\subsection{Proof of the theorem \ref{thm-meanvalue}.}
\begin{proof}
We prove this theorem similar to Michael-Simon \cite{michael1973sobolev}.
First from Lemma \ref{lem3}, we have
\begin{eqnarray*}
\sigma_{2}^{ij}b{}_{ij} & \geq & -C(f\sigma_{1}-\sigma_{3})+g^{ij}b_{i}d_{\nu}f(e_{j}).
\end{eqnarray*}
 Let $\chi$ be a non-negative, non-decreasing function in $C^{1}(\mathbb{R})$
with support in the interval $(0,\infty)$ and set

\[
\psi(r)=\int_{r}^{\infty}t\chi(\rho-t)dt
\]

where $0<\rho<10,$ and $r^{2}=f(X(x),\nu(x))|X(x)-X(y_{0})|{}^{2}+2-2(\nu(x),\nu(y_{0}))$.

Let us denote
\[
\mathfrak{B}_{\rho}=\{x\in B_{10}(y_{0}):\quad f(X(x),\nu(x))|X(x)-X(y_{0})|{}^{2}+2-2(\nu(x),\nu(y_{0}))\leq\rho^{2}\}.
\]
 We may assume that $(X(y_{0}),\nu(y_{0}))=(0,E_{4})$.

By direct computation, we have
\begin{equation}
2rr_{i}=f_{i}|X|^{2}+2f(X,e_{i})-2h_{ki}(e_{k},E_{4}),\label{eq:r1}
\end{equation}

and
\begin{eqnarray}
2r_{i}r_{j}+2rr_{ij} & = & f_{ij}|X|^{2}+2f_{i}(X,e_{j})+2f_{j}(X,e_{i})+2f\delta_{ij}\nonumber \\
 &  & -2fh_{ij}(X,\nu)-2h_{kij}(e_{k},E_{4})+2h_{ki}h_{kj}(\nu,E_{4}).\label{eq:r2}
\end{eqnarray}

Because $\lambda_{2}+\lambda_{3}>0$, we may assume $\lambda_{3}<0$,
and $\lambda_{1}\geq\lambda_{2}\geq\lambda_{3}$,
\begin{eqnarray}
f\sigma_{1}-\sigma_{3} & \geq & f\lambda_{1}+\lambda_{1}\lambda_{3}^{2}\nonumber \\
 & \geq & -c\lambda_{1}\lambda_{3}\nonumber \\
 & \geq & -c\lambda_{2}\lambda_{3}.\label{eq:lambda1}
\end{eqnarray}

By equation we also have
\begin{equation}
\lambda_{2}\lambda_{1}=f-\lambda_{1}\lambda_{3}-\lambda_{2}\lambda_{3}\leq C(f\sigma_{1}-\sigma_{3}).\label{eq:lambda2}
\end{equation}

We then have from (\ref{eq:r1}), (\ref{eq:r2}), (\ref{eq:lambda1}),
(\ref{eq:lambda2}) and Lemma \ref{lem1}
\begin{eqnarray}
\sigma_{2}^{ij}\psi_{ij} & = & \sigma_{2}^{ij}(-r_{i}r\chi(\rho-r))_{j}\nonumber \\
 & = & -\sigma_{2}^{ij}r_{ij}r\chi(\rho-r)-\sigma_{2}^{ij}r_{i}r_{j}\chi(\rho-r)+\sigma_{2}^{ij}r_{i}r_{j}r\chi^{\prime}(\rho-r)\nonumber \\
 & = & -\chi(\rho-r)\sigma_{2}^{ij}[f\delta_{ij}-fh_{ij}(X,\nu)+\frac{f_{ij}|X|^{2}}{2}+2f_{i}(X,e_{j})]\nonumber \\
 &  & +\chi(\rho-r)\sigma_{2}^{ij}[h_{kij}(e_{k},E_{4})-h_{ki}h_{kj}(\nu,E_{4})]\nonumber \\
 &  & +\sigma_{2}^{ij}r_{i}r_{j}r\chi^{\prime}(\rho-r)\nonumber \\
 & \leq & -3(\sigma_{1}f-\sigma_{3})\chi+C(r^{2}\chi+r\chi)(f\sigma_{1}-\sigma_{3})\nonumber \\
 &  & +\sigma_{2}^{ij}r_{i}r_{j}r\chi^{\prime}.\label{eq:psi}
\end{eqnarray}

We claim that
\begin{equation}
\sigma_{2}^{ij}r_{i}r_{j}\leq(f\sigma_{1}-\sigma_{3})(1+Cr).\label{eq:cla}
\end{equation}

In fact,
\begin{eqnarray*}
\frac{\sigma_{2}^{ii}[f_{i}|X|^{2}+2f(X,e_{i})-2\sum_{k}h_{ki}(e_{k},E_{4})]^{2}}{4r^{2}}\\
\leq C(f\sigma_{1}-\sigma_{3})r+\frac{\sigma_{2}^{ii}[f(X,e_{i})-h_{ii}(e_{i},E_{4})]^{2}}{r^{2}}.
\end{eqnarray*}

Moreover, we have following elementary properties
\begin{equation}
(f\sigma_{1}-\sigma_{3})\delta_{ij}-f\sigma_{2}^{ij}=\sigma_{2}^{kl}h_{ki}h_{lj}.
\end{equation}

and
\begin{eqnarray*}
f\sigma_{2}^{ii}h_{ii}^{2}(X,e_{i})^{2}+2f\sigma_{2}^{ii}(X,e_{i})(e_{i},E_{4})h_{ii}+f\sigma_{2}^{ii}(e_{i},E_{4})^{2} & \geq\\
f\sigma_{2}^{ii}[h_{ii}(X,e_{i})+(e_{i},E_{4})]^{2}.
\end{eqnarray*}

Then we obtain (\ref{eq:cla}) by
\[
\frac{\sigma_{2}^{ii}[f(X,e_{i})-h_{ii}(e_{i},E_{4})]^{2}}{r^{2}}\leq(f\sigma_{1}-\sigma_{3}).
\]

We obtain from (\ref{eq:cla}) and (\ref{eq:psi}) that
\[
\sigma_{2}^{ij}\psi_{ij}\leq(\sigma_{1}f-\sigma_{3})[-3\chi+C(r^{2}\chi+r\chi)+(1+Cr)r\chi^{\prime}].
\]

Then we mutiply both side by $b$ and take integral on the domain
$\mathfrak{B}_{10}$,
\begin{eqnarray}
\int_{\mathfrak{B}_{10}}b\sigma_{2}^{ij}\psi_{ij}dM & \leq & \rho^{4}\frac{d}{d\rho}(\int_{\mathfrak{B}_{10}}\frac{b\chi(\rho-r)}{\rho^{3}}(\sigma_{1}f-\sigma_{3})dM)\nonumber \\
 &  & +C\int_{\mathfrak{B}_{10}}br^{2}\chi^{\prime}(\sigma_{1}f-\sigma_{3})dM\nonumber \\
 &  & +C\int_{\mathfrak{B}_{10}}rb\chi(\rho-r)(\sigma_{1}f-\sigma_{3})dM.\label{eq:mean}
\end{eqnarray}

By (\ref{eq:logb}), we have
\begin{equation}
-C\int_{\mathfrak{B}_{10}}(\sigma_{1}f-\sigma_{3})\psi dM+\int_{\mathfrak{B}_{10}}g^{ij}d_{\nu}f(e_{i})b{}_{j}\mbox{\ensuremath{\psi}}dM\leq\int_{\mathfrak{B}_{10}}b\sigma_{2}^{ij}\psi_{ij}dM.\label{eq:step3}
\end{equation}

Inserting (\ref{eq:step3}) into (\ref{eq:mean}), we get
\begin{eqnarray*}
 & -\frac{d}{d\rho}(\int_{\mathfrak{B}_{10}}\frac{b\chi(\rho-r)}{\rho^{3}}(\sigma_{1}f-\sigma_{3})dM)\\
\leq & \frac{C\int_{\mathfrak{B}_{10}}rb\chi(\rho-r)(\sigma_{1}f-\sigma_{3})dM}{\rho^{4}}\\
 & +\frac{C\int_{\mathfrak{B}_{10}}br^{2}\chi^{\prime}(\sigma_{1}f-\sigma_{3})dM}{\rho^{4}}\\
 & +\frac{C\int_{\mathfrak{B}_{10}}(\sigma_{1}f-\sigma_{3})\psi dM}{\rho^{4}}\\
 & -\frac{\int_{\mathfrak{B}_{10}}g^{ij}d_{\nu}f(e_{i})b{}_{j}\mbox{\ensuremath{\psi}}dM}{\rho^{4}}.
\end{eqnarray*}

Because $\chi$, $\chi^{\prime}$ and $\psi$ are all supported in
$\mathfrak{B}_{\rho}$, we deal with right hand side of above inequality
term by term. For the first term, we have
\begin{equation}
\frac{C\int_{\mathfrak{B}_{10}}rb\chi(\rho-r)(\sigma_{1}f-\sigma_{3})dM}{\rho^{4}}\leq C\frac{\int_{\mathfrak{B}_{10}}b\chi(\rho-r)(\sigma_{1}f-\sigma_{3})dM}{\rho^{3}}.\label{eq:term1}
\end{equation}

Then for the second term, we integrate from $\delta$ to $R$,
\begin{eqnarray}
 & \int_{\delta}^{R}\frac{\int_{\mathfrak{B}_{10}}br^{2}\chi^{\prime}(\sigma_{1}f-\sigma_{3})dM}{\rho^{4}}d\rho\nonumber \\
\leq & \int_{\delta}^{R}\frac{\int_{\mathfrak{B}_{10}}b\chi^{\prime}(\sigma_{1}f-\sigma_{3})dM}{\rho^{2}}d\rho\nonumber \\
\leq & \frac{\int_{\mathfrak{B}_{10}}b\chi(\sigma_{1}f-\sigma_{3})dM}{\rho^{2}}|_{\delta}^{R}\nonumber \\
 & +\int_{\delta}^{R}\frac{2\int_{\mathfrak{B}_{10}}b\chi(\sigma_{1}f-\sigma_{3})dM}{\rho^{3}}d\rho.\label{eq:term2}
\end{eqnarray}

For the third term, we use the definition of $\psi$ to estimate
\begin{eqnarray}
 & \frac{C\int_{\mathfrak{B}_{10}}b(\sigma_{1}f-\sigma_{3})\psi dM}{\rho^{4}}\nonumber \\
\leq & \frac{C\int_{\mathfrak{B}_{10}}b\chi(\rho-r)(\sigma_{1}f-\sigma_{3})dM}{\rho^{3}}.\label{eq:term3}
\end{eqnarray}

For the last term, we use (\ref{eq:r1}) and the definition of $\psi$
to get
\begin{eqnarray}
 & -\frac{\int_{\mathfrak{B}_{10}}g^{ij}d_{\nu}f(e_{i})b_{j}\psi dM}{\rho^{4}}\nonumber \\
\leq & \frac{C\int_{\mathfrak{B}_{10}}b\sigma_{1}\psi dM-\int_{\mathfrak{B}_{10}}g^{ij}d_{\nu}f(e_{i})r_{j}br\chi dM}{\rho^{4}}\nonumber \\
\leq & \frac{C[\int_{\mathfrak{B}_{10}}b\sigma_{1}\psi dM+\int_{\mathfrak{B}_{10}}(\sigma_{1}f-\sigma_{3})br\chi dM]}{\rho^{4}}\nonumber \\
\leq & \frac{C\int_{\mathfrak{B}_{10}}b\chi(\rho-r)(\sigma_{1}f-\sigma_{3})dM}{\rho^{3}}.\label{eq:term4}
\end{eqnarray}

Combining (\ref{eq:term1}), (\ref{eq:term2}), (\ref{eq:term3})
and (\ref{eq:term4}), we integrate from $0\leq\delta$ to $R\leq10$,
then use Grönwall's inequality to get
\[
\int_{\mathfrak{B}_{10}}\frac{b(\sigma_{1}f-\sigma_{3})\chi(\delta-r)}{\delta^{3}}dM\leq C\int_{\mathfrak{B}_{10}}\frac{b(\sigma_{1}f-\sigma_{3})\chi(R-r)}{R^{3}}dM.
\]

Letting $\chi$ approximate the characteristic function of the interval
$(0,\infty)$, in an appropriate fashion, we obtain,
\begin{equation}
\frac{\int_{\mathfrak{B}_{\delta}}b(\sigma_{1}f-\sigma_{3})dM}{\delta^{3}}\leq C\frac{\int_{\mathfrak{B}_{R}}b(\sigma_{1}f-\sigma_{3})dM}{R^{3}}.\label{eq:monotonicity}
\end{equation}

Because the graph $(X,\nu)$ where $u$ satisfied equation (\ref{eq:sigma2})
can be viewed as a three dimensional smooth submanifold in $(\mathbb{R}^{4}\times\mathbb{R}^{4},f(\sum\limits _{i=1}^{4}dx_{i}^{2})+\sum\limits _{i=1}^{4}dy_{i}^{2})$
with volume form exactly $(\sigma_{1}f-\sigma_{3})dM$. Moreover,
for a sufficient small $\delta>0$, the geodesic ball with radius
$\delta$ of this submanifold is comparable with $\mathfrak{B}_{\delta}$.
Then let $\delta\rightarrow0$, we finally get
\begin{eqnarray*}
b(y_{0}) & \leq & C\frac{\int_{\mathfrak{B}_{R}(\bar{y}_{0})}b(\sigma_{1}f-\sigma_{3})dM}{R^{3}}\leq C\frac{\int_{B_{R}(y_{0})}b(\sigma_{1}f-\sigma_{3})dx}{R^{3}}.
\end{eqnarray*}
\end{proof}
\begin{acknowledgement*}
The author would like to express gratitude to Professor Pengfei Guan for supports and many helpful discussions when he did postdoctoral research at McGill University.
\end{acknowledgement*}
\bibliographystyle{plain}

\end{document}